\newtheorem{theorem}{Theorem}
\newtheorem{example}[theorem]{Example}
\newtheorem{lemma}[theorem]{Lemma}
\newtheorem{proposition}[theorem]{Proposition}
\newtheorem{remark}[theorem]{Remark}
\newenvironment{proof}[1][Proof]{\noindent\textbf{#1.} }{\ \rule{0.5em}{0.5em}}
\begin{document}

\title{An Osgood's criterion for a semilinear stochastic differential
equation}
\author{Jorge A. Le\'{o}n\thanks{%
Partially supported by a CONACyT grant.}, Liliana Peralta\thanks{%
Partially supported by a CONACyT fellowship.} \\
\texttt{jleon@ctrl.cinvestav.mx, lperalta@ctrl.cinvestav.mx} \\
Departamento de Control Autom\'{a}tico\\
Cinvestav-IPN, Apartado Postal 14-740, 07000\\
M\'{e}xico D.F., Mexico \and Jos\'{e} Villa-Morales\thanks{%
Partially supported by the grant PIM14-4 of UAA.} \\
\texttt{\ jvilla@correo.uaa.mx} \\
Departamento de Matem\'{a}ticas y F\'{\i}sica\\
Universidad Aut\'{o}noma de Aguascalientes\\
Av. Universidad 940, C.P. 20131\\
Aguascalientes, Ags., Mexico}
\date{}
\maketitle

\begin{abstract}
The purpose of this paper is to give an Osgood's criterion for solutions of
semilinear stochastic differential equations of the form $X_{t}=\xi
+\int_{0}^{t}b(s,X_{s})ds+\int_{0}^{t}\sigma (s)X_{s}dW_{s},\ t\geq 0$.
Here, $b$ is a non-negative, non-decreasing by components and continuous
random field and $\sigma $ is a predictable and continuous process. Also we
present a generalization of the so-called Feller's test whenever $\sigma
\equiv 1$.

\bigskip

\noindent \textit{Keywords and phrases}: explosion time, semilinear
stochastic differential equations, Feller's test, Osgood's criterion

\noindent \textit{2010 Mathematics Subject Classification:} Primary 45R05,
60H10; Secondary 34F05
\end{abstract}

\section{Introduction}

In the case of an ordinary differential equation (ODE), the explosion in
finite time is a very old and well-known subject. In fact, in 1898, W.F.
Osgood (see \cite{os}) established that the solution $y$ of 
\begin{eqnarray}
y^{\prime }(t) &=&b(y(t)),\ \ t>0,  \label{eqdo} \\
y(0) &=&\xi ,  \notag
\end{eqnarray}%
with $b>0$, blows up in finite time if and only if 
\begin{equation}
\int_{\xi }^{\infty }\frac{ds}{b(s)}<\infty .  \label{texposg}
\end{equation}%
Moreover, (\ref{texposg}) is the explosion time of the equation (\ref{eqdo}).

For the case of partial differential equations (PDE) the study of the
phenomenon of explosion was originated with the works of S. Kaplan \cite%
{kaplan} and H. Fujita \cite{fujita} and it is currently an area of very
fruitful research, see for instance \cite{hu}, \cite{pevilla}.

On the other hand, within the stochastic framework, since Feller's test \cite%
{Feller} for determining the explosion time of the autonomous stochastic
differential equation (SDE) 
\begin{eqnarray}
dX_{t} &=&b(X_{t})dt+\sigma (X_{t})dW_{t},\ \ t>0,  \label{sdea} \\
X_{0} &=&\xi ,  \notag
\end{eqnarray}
only few results have been developed, see for instance \cite{lionjose}.
However, the growing interest in the application of the explosion of SDE has
motivated its study. For example, when $b$ and $\sigma $ are power functions
the equation (\ref{sdea}) can be used for modeling the crack failure of some
materials (see for example \cite{sobczyk}). Also some numerical schemes have
been analyzed in order to approximate the time of explosion (consult D\'{a}%
vila et al. \cite{davila}).

For the non-autonomous case, Feller's test and Osgood's criterion are not
useful anymore. The contribution of this work is to deal with an extension
of the Osgood's criterion for the blow up in finite time of the solution $X$
of the semilinear stochastic differential equation%
\begin{eqnarray}
dX_{t} &=&b(t,X_{t})dt+\sigma (t)X_{t}dW_{t},\ \ t>0,  \label{sdeno} \\
X_{0} &=&\xi .  \notag
\end{eqnarray}

This paper is organized as follows. In Section \ref{Sec:pre} we precise the
Osgood's criterion and then present some necessary results in order to prove
an extension of Osgood's criterion for non-autonomous equations. The
criterion of blow up in finite time for the equation (\ref{sdeno}) is stated
and proved in Section \ref{Sec:main}.

\section{Preliminaries}

\label{Sec:pre} In this section for the convenience of the reader we briefly
introduce three well-known topics: the Feller's test, a comparison lemma and
the Osgood's criterion. We also present an extension of the latter.

Conditions necessary to determine whether or not the solution $X$ of an
equation as (\ref{sdea}) explodes in finite time with probability 1 have
been developed, most notably by William Feller \cite{Feller} (with $\xi$ a real number). The Feller's
explosion test just needs to know the coefficients $b$ and $\sigma $ of the
equation.

\begin{theorem}[Feller's test]
\label{feller} Suppose that $b,\sigma :(\ell ,r)\rightarrow \mathbb{R}$,
with $-\infty \leq \ell <r\leq \infty $, are continuous functions and $%
\sigma ^{2}>0$ in $(\ell ,r)$. The explosion time $\tau $ of the solution $X$
of equation (\ref{sdea}) is finite with probability 1 if and only if one of
the following conditions holds:
\begin{description}
\item[$(i)$] $v(r-)<\infty $ and $v(\ell +)<\infty $,

\item[$(ii)$] $v(r-)<\infty $ and $p(\ell +)=-\infty $, or

\item[$(iii)$] $v(\ell +)<\infty $ and $p(r-)=\infty $,
\end{description}
where 
\begin{eqnarray}
p(x) &=&\int_{\zeta }^{x}\exp \left( -2\int_{\zeta }^{s}\frac{b(r)dr}{\sigma
^{2}(r)}\right) ds,  \label{funfeller} \\
v(x) &=&\int_{\zeta }^{x}p^{\prime }(y)\int_{\zeta }^{y}\frac{2dz}{p^{\prime
}(z)\sigma ^{2}(z)}dy,  \label{defvF}
\end{eqnarray}
here $\zeta \in (\ell,r)$ is a constant.
\end{theorem}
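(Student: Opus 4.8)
This is the classical Feller explosion test, and the plan is to run the standard one-dimensional diffusion argument: reduce $X$ to natural scale, read its boundary behaviour from the scale function $p$ and from the auxiliary function $v$, and transport everything through a Brownian time change. Write $\mathcal{L}=\tfrac12\sigma^{2}(x)\tfrac{d^{2}}{dx^{2}}+b(x)\tfrac{d}{dx}$ for the generator. The two computational facts I would set down first are: differentiating (\ref{funfeller}) gives $p'=\exp\!\big(-2\int_{\zeta}^{\cdot}b/\sigma^{2}\big)>0$ and $p''=-(2b/\sigma^{2})p'$, so $\mathcal{L}p\equiv 0$; and a direct differentiation of (\ref{defvF}) gives $v\ge 0$, $v(\zeta)=v'(\zeta)=0$, and $\mathcal{L}v\equiv 1$. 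I would also record the elementary monotone-convergence implications $v(r-)<\infty\Rightarrow p(r-)<\infty$ and $p(\ell+)=-\infty\Rightarrow v(\ell+)=\infty$ (and their mirror images); these collapse the logical content of (i)--(iii), and of its negation, into a handful of cases.

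By It\^o's formula, $p(X_{t})$ and $v(X_{t})-t$ are continuous local martingales on $[0,\tau)$. Taking $\ell_{n}\downarrow\ell$, $r_{n}\uparrow r$ and letting $\tau_{n}$ be the first exit time of $X$ from $(\ell_{n},r_{n})$, one has $\tau_{n}\uparrow\tau$, and on each $[0,\tau_{n}]$ these processes are genuine (bounded) martingales, so optional stopping yields
\[
\mathbb{E}[\,t\wedge\tau_{n}\,]=\mathbb{E}\big[v(X_{t\wedge\tau_{n}})\big]-v(\xi),\qquad
P\big(X_{\tau_{n}}=\ell_{n}\big)=\frac{p(r_{n})-p(\xi)}{p(r_{n})-p(\ell_{n})},
\]
with the obvious conventions when $p(\ell+)=-\infty$ or $p(r-)=+\infty$. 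It is convenient to also pass to natural scale: $Y_{t}:=p(X_{t})$ is a local martingale, so by Dambis--Dubins--Schwarz $Y_{t}=\widetilde\beta_{A_{t}}$ with $\widetilde\beta$ a Brownian motion started at $p(\xi)$ and $A_{t}=\int_{0}^{t}(\sigma p')^{2}(X_{s})\,ds$; since $Y_{t}$ stays in $(p(\ell+),p(r-))$, after checking the time-change bookkeeping one gets $\tau=\int_{0}^{\rho}\psi(\widetilde\beta_{s})^{-1}\,ds$, where $\psi:=(\sigma p')^{2}\!\circ p^{-1}$ and $\rho$ is the first time $\widetilde\beta$ hits $p(r-)$ (taken $=+\infty$ when $p(r-)=+\infty$).

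For the sufficiency of (i)--(iii): if (i) holds then $v$ is bounded, so the first identity gives $\mathbb{E}[t\wedge\tau_{n}]\le\sup v-v(\xi)$, and letting $t\uparrow\infty$ then $n\uparrow\infty$ yields $\mathbb{E}[\tau]<\infty$, hence $\tau<\infty$ a.s. If (ii) holds, then $p(\ell+)=-\infty$ (so $v(\ell+)=\infty$) while $v(r-)<\infty$ (so $p(r-)<\infty$); then $\widetilde\beta$ is an honest Brownian motion that hits $p(r-)$ at the a.s.\ finite time $\rho$, and by the occupation-times formula $\tau=\int_{(p(\ell+),p(r-))}L^{\rho}_{y}\psi(y)^{-1}\,dy$ decomposes into a piece near $p(r-)$ of finite expectation (that expectation being $v(r-)$ up to an additive constant) and a piece away from $p(r-)$ which is a.s.\ finite because $\widetilde\beta$ is bounded on $[0,\rho]$; hence $\tau<\infty$ a.s. Case (iii) is the mirror image. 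For the necessity, assume none of (i)--(iii) holds; by the implications above this leaves exactly (A) $v(\ell+)=v(r-)=\infty$, (B) $v(r-)=\infty$ with $v(\ell+)<\infty$ and $p(r-)<\infty$, and (C) the mirror of (B). In case (A) I would invoke the Khasminskii non-explosion test with the Lyapunov function $\phi:=1+v$: then $\phi\ge 1$, $\phi(\ell+)=\phi(r-)=\infty$, $\mathcal{L}\phi=1\le\phi$, so $e^{-t}\phi(X_{t\wedge\tau_{n}})$ is a supermartingale and Fatou's lemma forces $P(\tau\le t)=0$ for all $t$, i.e.\ $\tau=\infty$ a.s. In case (B), $Y=p(X)$ is a bounded local martingale, hence converges a.s.\ to an endpoint of $(p(\ell+),p(r-))$, and $\mathbb{E}[Y_{\infty}]=p(\xi)$ gives $P(X_{t}\to r)=\tfrac{p(\xi)-p(\ell+)}{p(r-)-p(\ell+)}>0$; on that event a localization of the non-explosion estimate near $r$ (again with $1+v$, now using $v(r-)=\infty$) shows the passage to $r$ takes infinite time, so $P(\tau=\infty)>0$ and $\tau$ is not a.s.\ finite; case (C) is symmetric.

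The step I expect to be the real obstacle is the boundary bookkeeping that glues together the last two parts: one must keep track that the Brownian motion delivered by Dambis--Dubins--Schwarz is only the one driving $X$ \emph{up to} explosion, and so must be extended to relate $\tau$ to the honest hitting time $\rho$; one must verify that the local-time/occupation-time estimate near $p(r-)$ identifies the integrability of $L^{\rho}_{y}/\psi(y)$ precisely with the finiteness of $v(r-)$; and one must cleanly separate the ``natural'' boundary cases (both $p$ and $v$ infinite at an end) from the ``accessible'' ones. Once $\mathcal{L}p\equiv 0$, $\mathcal{L}v\equiv 1$ and the monotone-convergence implications are in place, the remaining computations are routine.
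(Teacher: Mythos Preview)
The paper does not actually prove this theorem: its entire proof is the one-line citation ``See in Chapter~5 of \cite{karatzas} the Proposition~5.32.'' So there is nothing to compare your argument against here beyond that reference.

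Your sketch is a faithful outline of the classical one-dimensional diffusion proof that the citation points to: compute $\mathcal{L}p\equiv 0$ and $\mathcal{L}v\equiv 1$, apply optional stopping on $[\ell_n,r_n]$ to get the exit-probability and mean-exit-time identities, and pass to natural scale via a time change. Your case split for the negation (cases (A), (B), (C)) is correct, and the Khasminskii supermartingale $e^{-t}(1+v(X_{t\wedge\tau_n}))$ is exactly the right Lyapunov device for case~(A). The points you flag as delicate---extending the Dambis--Dubins--Schwarz Brownian motion past the explosion time, identifying the finiteness of the occupation-time integral near $p(r-)$ with $v(r-)<\infty$, and localizing the non-explosion bound near a single endpoint in case~(B)---are indeed where the work lies, but the strategy is sound. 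In short: the paper outsources the proof; you are sketching the standard proof that the outsourced reference contains.
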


\begin{proof}
See in Chapter 5 of \cite{karatzas} the Proposition 5.32.\hfill
\end{proof}

\bigskip

In the case where the coefficient $b$ is non-negative and the term $\sigma $
is zero, the equation (\ref{sdea}) becomes an ordinary differential equation
for which the criterion of explosion is known as Osgood's criterion. To
establish this result we introduce the following function 
\begin{equation*}
B_{\xi }(x)=\int_{\xi }^{x}\frac{ds}{b(s)},\ \ \xi \leq x\leq \infty .
\end{equation*}

\begin{proposition}[Osgood's criterion]
\label{osgood}Let $b:{\mathbb{R}}\rightarrow {\mathbb{R}}$ be a continuous
function such that $b>0$ in $(c,\infty )$ and $\xi >c\in {\mathbb{R}}$. If $y
$ is a solution of the integral equation 
\begin{equation*}
y(t)=\xi +\int_{0}^{t}b(y(s))ds,\ \ t\geq 0,
\end{equation*}%
then the explosion time $T_{e}:=\sup \{t>0:|y(t)|<\infty \}$ of $y$ is
finite if and only if $B_{\xi }(\infty )<\infty $. Moreover the solution
must be 
\begin{equation*}
y(t)=B_{\xi }^{-1}(t),\ \ 0\leq t<B_{\xi }(\infty )=T_{e}.
\end{equation*}
\end{proposition}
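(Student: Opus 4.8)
The plan is to establish the equivalence by solving the ODE explicitly via separation of variables, exploiting the strict positivity of $b$ on $(c,\infty)$ together with the monotonicity that $B_\xi$ inherits from it. First I would observe that since $y(0)=\xi>c$ and $y$ is continuous, $y$ stays strictly above $c$ at least for small $t$; in fact $y$ is non-decreasing because $y'(t)=b(y(t))>0$ as long as $y(t)>c$, so $y(t)\ge\xi>c$ for all $t$ in the interval of existence and $y$ never re-enters $(-\infty,c]$. Consequently $b(y(s))>0$ throughout, the map $B_\xi$ is a strictly increasing $C^1$ bijection from $[\xi,\infty)$ onto $[0,B_\xi(\infty))$, and we may legitimately divide by $b(y(s))$.

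Next I would differentiate $B_\xi(y(t))$: by the chain rule and the fundamental theorem of calculus, $\frac{d}{dt}B_\xi(y(t))=\frac{y'(t)}{b(y(t))}=\frac{b(y(t))}{b(y(t))}=1$, and since $B_\xi(y(0))=B_\xi(\xi)=0$, integration gives $B_\xi(y(t))=t$ for every $t$ in the maximal interval of existence. Equivalently, $y(t)=B_\xi^{-1}(t)$ wherever the right-hand side makes sense. This identity immediately pins down both the solution formula and the explosion time: $y(t)$ is finite precisely when $t$ lies in the range of $B_\xi$, i.e. $t<B_\xi(\infty)$, so $T_e=B_\xi(\infty)$. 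Hence $T_e<\infty$ if and only if $B_\xi(\infty)<\infty$, which is the claimed criterion, and the explicit form $y(t)=B_\xi^{-1}(t)$ on $[0,B_\xi(\infty))$ follows.

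The main technical point requiring care — and the step I expect to be the real obstacle — is justifying that the maximal interval of existence of $y$ coincides with $[0,B_\xi(\infty))$ rather than being cut short for some other reason, and that $T_e$ as defined (the supremum of times where $|y|<\infty$) genuinely equals the blow-up time. Here one must argue that as $t\uparrow B_\xi(\infty)$ the relation $B_\xi(y(t))=t$ forces $y(t)\uparrow\infty$ (using that $B_\xi$ is a bijection onto $[0,B_\xi(\infty))$), so the solution cannot be continued past $B_\xi(\infty)$ when that value is finite, while if $B_\xi(\infty)=\infty$ the same relation keeps $y(t)$ finite for all $t\ge0$ and no explosion occurs. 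A secondary subtlety is uniqueness: although $b$ is merely continuous, the monotonicity of $B_\xi$ makes the implicit equation $B_\xi(y(t))=t$ have a unique solution, which is why the phrase ``the solution must be'' is warranted even absent a Lipschitz hypothesis. I would close by noting these two observations together yield both directions of the equivalence and the formula simultaneously.
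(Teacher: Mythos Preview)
Your argument is correct and is precisely the classical separation-of-variables proof of Osgood's criterion. Note, however, that the paper does not supply its own proof of this proposition: it is stated as a well-known result (attributed to Osgood, 1898) and left unproved, so there is no in-paper argument to compare against. Your write-up would serve perfectly well as the omitted proof; the only cosmetic point is that the paper merely assumes $b>0$ on $(c,\infty)$ without any sign hypothesis on $(-\infty,c]$, so in your first step the claim ``$y$ is non-decreasing'' should be justified, as you do, only after establishing $y(t)>c$ --- you handle this correctly via the bootstrap $y(t)\ge\xi>c$.
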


\bigskip

We use the following notation, if $X$ is the solution of certain equation
with initial condition $x_{0}$, then by $T_{x_{0}}^{X}$ we will denote the
time of explosion of $X$.

The following comparing result will be essential in our study of the
behavior of semilinear SDE.

\begin{lemma}
\label{lem:comp} Let $b:{\mathbb{R}}\rightarrow {\mathbb{R}}$ be a
continuous non-negative function. Also assume that $b$ is non-decreasing and
positive in $(c,\infty )$, $\xi >c\in {\mathbb{R}}$ and $x,y:[0,T]%
\rightarrow {\mathbb{R}}$ are two continuous functions:

\begin{description}
\item[$(i)$] If 
\begin{equation}
y(t)\geq \xi +\int_{0}^{t}b(y(s))ds,\ \ t\in \lbrack 0,T],  \label{eq:comp}
\end{equation}%
and 
\begin{equation*}
x(t)=\xi +\int_{0}^{t}b(x(s))ds,\ \ t\in \lbrack 0,T],
\end{equation*}%
then $y(t)\geq x(t)$, for all $t\in \lbrack 0,T]$.

\item[$(ii)$] Moreover, if we assume $y>c$ on $\lbrack 0,T]$, 
\begin{equation*}
y(t)\leq \xi +\int_{0}^{t}b(y(s))ds,\ \ t\in \lbrack 0,T],
\end{equation*}%
and 
\begin{equation*}
x(t)=\xi +\int_{0}^{t}b(x(s))ds,\ \ t\in \lbrack 0,T],
\end{equation*}%
then $y(t)\leq x(t)$, for all $t\in \lbrack 0,T]$.
\end{description}
\end{lemma}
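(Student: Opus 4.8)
The statement to prove is the comparison Lemma~\ref{lem:comp}. Here is how I would approach it.

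\textbf{Part $(i)$.} The plan is to compare $y$ with $x$ directly via the integral inequalities, exploiting monotonicity of $b$. First I would introduce $z(t) = y(t) - x(t)$ and try to show $z \geq 0$ on $[0,T]$. Since both functions start at $\xi$, we have $z(0) = 0$. Subtracting the two relations gives
\begin{equation*}
z(t) \geq \int_0^t \bigl( b(y(s)) - b(x(s)) \bigr)\, ds.
\end{equation*}
The natural move is an argument by contradiction: suppose the set $N = \{ t \in [0,T] : z(t) < 0 \}$ is nonempty. By continuity $N$ is open in $[0,T]$ and $0 \notin N$, so let $t_0 = \inf N$; then $z(t_0) = 0$ and there is a maximal interval $(t_0, t_1) \subseteq N$ on which $z < 0$, i.e. $y(s) < x(s)$ for $s \in (t_0,t_1)$. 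For such $s$, I want $b(y(s)) \geq b(x(s))$ so that the integrand above is non-negative and hence $z(t) \geq \int_{t_0}^t (b(y(s))-b(x(s)))\,ds + z(t_0) \geq 0$ on $(t_0,t_1)$, contradicting $z < 0$ there. The subtlety is that $b$ is only assumed non-decreasing (and positive) on $(c,\infty)$, and merely continuous and non-negative on all of $\mathbb{R}$; so to get $b(y(s)) \geq b(x(s))$ from $y(s) < x(s)$ I need to know the relevant values lie in the region where $b$ is monotone. Since $x$ solves the equation with $x(0) = \xi > c$ and $b \geq 0$, $x$ is non-decreasing, so $x(s) \geq \xi > c$ for all $s$; and on the bad interval $y(s) < x(s)$ pushes $y(s)$ below $x(s)$ but we still need $y(s) > c$ — here I would note that actually only the inequality $b(y(s)) \le b(x(s))$ can fail us, and since $b \ge 0$ everywhere while $b$ is non-decreasing on $(c,\infty)$ with $x(s) > c$, we get $b(x(s)) \le \sup$ is not quite it; rather, the clean statement is: if $y(s) \ge c$ then monotonicity gives $b(y(s)) \le b(x(s))$ wait that is the wrong direction. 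Let me restate: we are on $y(s) < x(s)$ and want $b(y(s)) \le b(x(s))$ actually NO — we want the integrand $b(y(s)) - b(x(s))$ to have a sign that forces a contradiction, and since we derived $z(t) \ge \int (b(y)-b(x))$, to contradict $z<0$ we want $b(y(s)) \ge b(x(s))$, which with $y(s) < x(s)$ needs $b$ \emph{non-increasing} — that is false. So the contradiction must instead be run the other way, via a Gronwall-type / comparison-of-solutions argument rather than naive sign-chasing.

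\textbf{Revised plan for $(i)$.} I would instead invoke Osgood/uniqueness-type reasoning: let $x$ be the (maximal) solution and argue that $y$ cannot dip below it. Concretely, for $\varepsilon > 0$ let $x_\varepsilon$ solve $x_\varepsilon(t) = \xi - \varepsilon + \int_0^t b(x_\varepsilon(s))\,ds$ on a common interval; by continuous dependence $x_\varepsilon \to x$ uniformly on $[0,T]$ as $\varepsilon \to 0$. It then suffices to show $y(t) > x_\varepsilon(t)$ for every small $\varepsilon$. Since $y(0) = \xi > \xi - \varepsilon = x_\varepsilon(0)$, by continuity $y > x_\varepsilon$ on a maximal interval $[0,\tau)$; if $\tau \le T$ then $y(\tau) = x_\varepsilon(\tau)$, and comparing the integral relations at $\tau$,
\begin{equation*}
y(\tau) - x_\varepsilon(\tau) \ge \varepsilon + \int_0^\tau \bigl( b(y(s)) - b(x_\varepsilon(s)) \bigr) ds \ge \varepsilon + 0 > 0,
\end{equation*}
where the middle integral is $\ge 0$ because on $[0,\tau)$ we have $y(s) > x_\varepsilon(s) \ge \xi - \varepsilon$, and taking $\varepsilon$ small enough (so $\xi - \varepsilon > c$) monotonicity of $b$ on $(c,\infty)$ gives $b(y(s)) \ge b(x_\varepsilon(s))$. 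This contradicts $y(\tau) = x_\varepsilon(\tau)$, so $\tau > T$, i.e. $y > x_\varepsilon$ on $[0,T]$; letting $\varepsilon \to 0$ yields $y \ge x$ on $[0,T]$.

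\textbf{Part $(ii)$.} This is the mirror image and I would run the symmetric argument: under the extra hypothesis $y > c$ on $[0,T]$, perturb upward, letting $x^\varepsilon$ solve with initial value $\xi + \varepsilon$, so $x^\varepsilon(0) > y(0)$; show $x^\varepsilon > y$ on $[0,T]$ by the same maximal-interval contradiction — if they first meet at $\tau$, then $x^\varepsilon(\tau) - y(\tau) \ge \varepsilon + \int_0^\tau (b(x^\varepsilon(s)) - b(y(s)))\,ds$, and on $[0,\tau)$ we have $c < y(s) < x^\varepsilon(s)$, so both arguments lie in $(c,\infty)$ and monotonicity gives $b(x^\varepsilon(s)) \ge b(y(s))$, making the right side $\ge \varepsilon > 0$, a contradiction. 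Then $x^\varepsilon \to x$ uniformly as $\varepsilon \to 0$ (continuous dependence of the ODE solution on the initial datum, valid up to any time before explosion; note $x$ does not explode on $[0,T]$ since it is given to be a continuous function there), giving $y \le x$ on $[0,T]$. The main obstacle throughout is the bookkeeping around the region $(c,\infty)$: I must ensure that all function values fed into $b$ stay where $b$ is monotone, which is why the one-sided perturbation (keeping $\xi - \varepsilon > c$ in $(i)$, and using the hypothesis $y > c$ in $(ii)$) is essential, and why continuous dependence on initial conditions — rather than a bare Gronwall estimate, which would require a Lipschitz bound $b$ need not satisfy — is the right tool.
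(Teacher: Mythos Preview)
Your approach is correct and matches the paper's proof almost exactly: both perturb the initial datum (downward by $r$ in $(i)$, upward in $(ii)$), use monotonicity of $b$ on $(c,\infty)$ together with a first-crossing contradiction to compare $y$ with the perturbed solution, and then pass to the limit $r\to 0$. The only cosmetic difference is that where you invoke ``continuous dependence on initial data'' for the limit step, the paper makes this concrete via Proposition~\ref{osgood}, writing $x_r(t)=B_{\xi-r}^{-1}(t)\to B_\xi^{-1}(t)=x(t)$ and noting that continuity of $B_{\cdot}(t)$ gives continuity of $B_{\cdot}^{-1}(t)$ --- this is precisely the non-Lipschitz justification your final remark anticipates.
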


\begin{proof}
\textbf{Case (i):} Let $0<r<\xi -c$ and consider the solution $x_{r}$ of%
\begin{equation*}
x_{r}(t)=\xi -r+\int_{0}^{t}b(x_{r}(s))ds,\ \ t\in \lbrack 0,T].
\end{equation*}%
Define $N_{r}$ as the set $\{t\in \lbrack 0,T]:x_{r}(s)\leq y(s),\forall
s\in \lbrack 0,t]\}$. Observe that $N_{r}\neq \varnothing $ (indeed $0\in
N_{r}$). Since $b\geq 0$, then $x_{r}\geq \xi -r$ on $\lbrack 0,T]$. Using
that $b$ is non-decreasing in $(c,\infty )$ we have that $b(y(s))\geq
b(x_{r}(s))$ if $y(s)\geq x_{r}(s)>c$. Let us see that $T_{r}:=\sup
N_{r}<T$ is not possible. In fact, since%
\begin{eqnarray*}
\lim_{\varepsilon \downarrow 0}\left( y(T_{r}+\varepsilon
)-x_{r}(T_{r}+\varepsilon )\right) &\geq
&r+\int_{0}^{T_{r}}[b(y(s))-b(x_{r}(s))]ds \\
&&+\lim_{\varepsilon \downarrow 0}\int_{T_{r}}^{T_{r}+\varepsilon
}[b(y(s))-b(x_{r}(s))]ds \\
&\geq &r>0
\end{eqnarray*}%
then the continuity of $y-x_{r}$ implies $T_{r}<\sup N_{r}$. Therefore $%
x_{r}\leq y$ on $\lbrack 0,T]$. On the other hand, by Proposition \ref%
{osgood} we have 
\begin{equation*}
B_{\xi }^{-1}(t)=\lim_{r\downarrow 0}B_{\xi -r}^{-1}(t)=\lim_{r\downarrow
0}x_{r}(t)\leq y(t),\ \ t\in \lbrack 0,T].
\end{equation*}%
To get the first equality, in the above expression, we have used that the
continuity of $B_{\cdot }(t)$ implies the continuity of $B_{\cdot }^{-1}(t)$.

\textbf{Case (ii):} The proof is like the previous case, but now it is
convenient consider the solution $x_{r}$ of%
\begin{equation*}
x_{r}(t)=\xi +r+\int_{0}^{t}b(x_{r}(s))ds,\ \ t\in \lbrack 0,T],
\end{equation*}%
where $r>0$.\hfill 
\end{proof}

\bigskip

In Theorem 2.2.4 of \cite{pachpa} the interested reader can see other
version of Lemma \ref{lem:comp} (in \cite{pachpa} the function $b$ is
supposed to be monotone throughout its domain).

Using the last two results we can state the following extension of Osgood's
criterion for non-autonomous equations, which is important in itself.

\begin{proposition}
Let $b:[0,\infty )\times \mathbb{R}\rightarrow \mathbb{R}$ be a non-negative
continuous function. We also assume that $b$ is positive and non-decreasing
by components on $[0,\infty )\times (c,\infty ),$ with $c\in \mathbb{R}$.
Then, a solution of the equation%
\begin{eqnarray}
y^{\prime }(t) &=&b(t,y(t)),\ \ t>0,  \label{eq_propo} \\
y(0) &=&\xi ,  \notag
\end{eqnarray}%
where $\xi >c$, explodes in finite time if and only if 
\begin{equation*}
\int_{\xi }^{\infty }\frac{ds}{b(a,s)}<\infty ,
\end{equation*}%
for some $a>0$.
\end{proposition}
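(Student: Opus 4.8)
The plan is to reduce the non-autonomous problem to the autonomous Osgood criterion (Proposition~\ref{osgood}) by squeezing the solution $y$ of \eqref{eq_propo} between two autonomous solutions whose vector fields are $s\mapsto b(a,s)$ for suitable fixed times $a$, and then to invoke the comparison Lemma~\ref{lem:comp}. Throughout I will use that $y$ is non-decreasing (since $b\geq 0$ and $y(0)=\xi>c$, so $y(t)\geq\xi>c$ for all $t$), hence the explosion of $y$ is a genuine blow-up to $+\infty$, and that $B_{\xi}$ is strictly increasing on $[\xi,\infty)$ so $B_{\xi}^{-1}$ makes sense. Note first that the integrated form of \eqref{eq_propo} reads $y(t)=\xi+\int_0^t b(s,y(s))\,ds$.

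First I would prove the ``only if'' direction. Suppose $y$ explodes at some finite time $T_e$; fix any $a\in(0,T_e)$. For $t\in[0,a]$ monotonicity of $b$ in the time component gives $b(s,y(s))\leq b(a,y(s))$ for $s\leq a$, so
\begin{equation*}
y(t)\leq\xi+\int_0^t b(a,y(s))\,ds,\qquad t\in[0,a].
\end{equation*}
Applying Lemma~\ref{lem:comp}(ii) with the autonomous field $x\mapsto b(a,x)$ on $[0,a]$ (whose solution is $x(t)=B_{\xi,a}^{-1}(t)$ where $B_{\xi,a}(x):=\int_\xi^x ds/b(a,s)$), I get $y(t)\leq B_{\xi,a}^{-1}(t)$ on $[0,a]$. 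But if $\int_\xi^\infty ds/b(a,s)=\infty$ for every $a>0$, then in particular $B_{\xi,a}^{-1}$ exists and is finite on all of $[0,\infty)$, forcing $y$ to stay bounded on $[0,a]$; since $a<T_e$ was arbitrary this contradicts explosion at $T_e$. (One must be mildly careful: before applying the lemma one needs $y>c$ on $[0,a]$, which holds, and one needs the autonomous comparison solution to exist up to time $a$, which is exactly the non-explosion guaranteed by the divergent integral. I would spell this continuation argument out.)

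For the ``if'' direction, assume $\int_\xi^\infty ds/b(a,s)<\infty$ for some fixed $a>0$. Here monotonicity in time gives the reverse bound on $[0,a]$: for $s\in[0,a]$, pick the smaller time, $b(s,y(s))\ge b(0,y(s))$ is the wrong direction, so instead I compare on a shifted interval — for $t\ge a$ one has $b(t,y(t))\ge b(a,y(t))$ when $y(t)\ge$ the relevant value, giving
\begin{equation*}
y(t)\ge y(a)+\int_a^t b(a,y(s))\,ds\ge \xi+\int_a^t b(a,y(s))\,ds,\qquad t\ge a,
\end{equation*}
since $y(a)\ge\xi$. Then Lemma~\ref{lem:comp}(i), applied to the autonomous field $x\mapsto b(a,x)$ with initial value $\xi$ started at time $a$, yields $y(a+t)\ge B_{\xi,a}^{-1}(t)$ for $t$ in the common interval of definition; since $B_{\xi,a}(\infty)<\infty$, the right-hand side blows up at the finite time $B_{\xi,a}(\infty)$, so $y$ explodes no later than $a+B_{\xi,a}(\infty)<\infty$.

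The main obstacle I anticipate is bookkeeping rather than deep mathematics: making the interval-of-definition / continuation argument rigorous so that Lemma~\ref{lem:comp} (which is stated on a fixed compact $[0,T]$) can be invoked on arbitrarily large intervals, and correctly handling the time-shift in the ``if'' direction so that the monotonicity-in-$t$ hypothesis is used in the right direction. A secondary point to check is that $b(a,\cdot)$ is genuinely positive and non-decreasing on $(c,\infty)$ and continuous on $\mathbb{R}$, so that Proposition~\ref{osgood} and Lemma~\ref{lem:comp} genuinely apply with $b$ replaced by the section $b(a,\cdot)$; this is immediate from the standing hypotheses on $b$. Once these technical matters are dispatched, the equivalence follows by combining the two displayed comparisons with the Osgood dichotomy for $B_{\xi,a}$.
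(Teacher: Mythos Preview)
Your ``if'' direction is correct and coincides with the paper's argument (the paper phrases it contrapositively, but the time shift by $a$ and the use of Lemma~\ref{lem:comp}(i) together with Osgood are identical).

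The ``only if'' direction, however, has a genuine gap. You fix $a\in(0,T_e)$, obtain the inequality $y(t)\le\xi+\int_0^t b(a,y(s))\,ds$ on $[0,a]$, and via Lemma~\ref{lem:comp}(ii) deduce $y(t)\le B_{\xi,a}^{-1}(t)$ on $[0,a]$. You then say this ``forces $y$ to stay bounded on $[0,a]$'' and, since $a<T_e$ is arbitrary, that this contradicts explosion at $T_e$. But it does not: explosion at $T_e$ means exactly that $y$ is finite (hence bounded, being continuous) on every $[0,a]$ with $a<T_e$, while $y(t)\to\infty$ as $t\uparrow T_e$. Your bound $y(a)\le B_{\xi,a}^{-1}(a)$ depends on $a$, and nothing you wrote prevents $B_{\xi,a}^{-1}(a)\to\infty$ as $a\uparrow T_e$; so no contradiction follows.

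The repair is to choose the frozen time at or beyond the explosion time rather than before it, and this is precisely what the paper does: since $b(\cdot,x)$ is non-decreasing and $y(s)>c$, one has $b(s,y(s))\le b(T_e,y(s))$ for every $s<T_e$, whence
\[
y(t)\le \xi+\int_0^t b(T_e,y(s))\,ds,\qquad 0\le t<T_e.
\]
Now Lemma~\ref{lem:comp}(ii) (applied on each $[0,T]$ with $T<T_e$) gives $y\le v$ on $[0,T_e)$, where $v$ solves $v(t)=\xi+\int_0^t b(T_e,v(s))\,ds$. Since $y$ blows up at $T_e$, so must $v$, and Osgood's criterion yields $\int_\xi^\infty ds/b(T_e,s)<\infty$, which is the desired conclusion with $a=T_e$. (Alternatively, your estimate can be salvaged by bounding $B_{\xi,a}^{-1}(a)\le B_{\xi,T_e}^{-1}(T_e)$ via the monotonicity of $a\mapsto b(a,\cdot)$, but this again amounts to freezing time at $T_e$.)
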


\begin{proof}
Suppose that $y$ is a solution of (\ref{eq_propo}) and that explodes at time $T_{\xi
}^{y}<\infty $. Inasmuch as $b\left( \cdot ,x\right) $ is non-decreasing we
obtain%
\begin{equation*}
y(t)\leq \xi +\int_{0}^{t}b(T_{\xi }^{y},y(s))ds,\ \ t<T_{\xi }^{y}.
\end{equation*}%
Using that $b\geq 0$ we see, from (\ref{eq_propo}), that $y$ is
non-decreasing. Therefore $y(0)=\xi $ imply $y>c$ on $[0,\infty )$. Hence
applying Lemma \ref{lem:comp} $(ii)$ we can deduce that the solution of the
equation%
\begin{equation*}
v(t)=\xi +\int_{0}^{t}b(T_{\xi }^{y},v(s))ds,\ \ t\geq 0,
\end{equation*}%
explodes in finite time. Thus, by Osgood's criterion we can conclude that $%
\int_{\xi }^{\infty }(b(T_{\xi }^{y},s))^{-1}ds<\infty$.

Reciprocally, suppose that the solution $y$ of (\ref{eq_propo}) does not
blow up in finite time. Let $a>0$, using that $b\geq 0$ we have
\begin{equation*}
y(t)\geq \xi +\int_{a}^{t}b(s,y(s))ds\geq \xi +\int_{a}^{t}b(a,y(s))ds,\ \
t\geq a.
\end{equation*}%
Consequently, the solution of 
\begin{equation*}
u(t)=\xi +\int_{0}^{t}b\left( a,u(s)\right) ds,\ \ t\geq 0,
\end{equation*}%
does not explode in finite time, because by Lemma \ref{lem:comp} $(i)$ we
can deduce that $u(t)\leq y(t+a)$, for each $t\geq 0$. Therefore $\int_{\xi
}^{\infty }(b(a,s))^{-1}ds=\infty $ due to Osgood's criterion.\hfill
\end{proof}

\section{Semilinear stochastic differential equations}

\label{Sec:main}The purpose of this section is to study the semilinear
stochastic differential equation (SDE)%
\begin{equation}
X_{t}=\xi +\int_{0}^{t}b(s,X_{s})ds+\int_{0}^{t}\sigma (s)X_{s}dW_{s},\ \
t\geq 0.  \label{eqpmb}
\end{equation}%
Hereinafter $W=\{W_{t}:t\geq 0\}$ is a one-dimensional Brownian motion
defined on a filtered probability space $(\Omega ,\mathcal{F},(\mathcal{F}%
_{t})_{t\geq 0},\mathbb{P})$, where $(\Omega ,\mathcal{F},\mathbb{P})$ is
complete and $(\mathcal{F}_{t})_{t\geq 0}$ is supposed to satisfy the usual
conditions. The initial condition $\xi $ is a $\mathcal{F}_{0}$-measurable
random variable and the coefficients $b$ and $\sigma $ satisfy the following
assumptions:

\begin{description}
\item[\textbf{H1}:] $b:(\Omega \times \lbrack 0,\infty )\times \mathbb{R},%
\mathcal{P}\otimes \mathcal{B}(\mathbb{R}))\rightarrow \mathbb{R}$ is a
continuous non-negative random field with probability one. Here $\mathcal{P}$
is the predictable $\sigma $-algebra and $\mathcal{B}(\mathbb{R})$ is the
Borel $\sigma $-algebra on $\mathbb{R}.$

\item[\textbf{H2}:] $\sigma :\Omega \times \lbrack 0,\infty )\rightarrow 
\mathbb{R}$ is a predictable and continuous process.
\end{description}

Note that these assumptions, on the coefficients in (\ref{eqpmb}), will be
assumed on the rest of the section.

\subsection{A particular case of Feller's test}

As a first step, in order to have a better understanding of the remainder of
the article, in this subsection we analyze the autonomous case of the
equation (\ref{eqpmb}) when $\sigma \equiv 1$, specifically we study the SDE%
\begin{equation}
Z_{t}=\xi +\int_{0}^{t}b(Z_{s})ds+\int_{0}^{t}Z_{s}dW_{s},\ \ t\geq 0,
\label{pacauto}
\end{equation}%
where $\xi >0$ is a real number and $b:{\mathbb{R}}\rightarrow \mathbb{R}$
is a continuous non-negative function. We begin with the next result for
which we use the following notation 
\begin{equation*}
\bar{b}(x)=\frac{b(x)}{x},\ \ x>0.
\end{equation*}

\begin{theorem}
\label{VillaFeller}Suppose that $\bar{b}:(0,\infty )\rightarrow {\mathbb{R}}$
is a non-decreasing function such that $\bar{b}>1/2$. Then, the explosion
time $T_{\xi }^{Z}$ of the solution $Z$ of $(\ref{pacauto})$ is finite with
probability $1$ if and only if 
\begin{equation*}
\int_{\xi }^{\infty }\frac{ds}{2b(s)-s}<\infty .
\end{equation*}
\end{theorem}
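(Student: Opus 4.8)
The plan is to reduce equation (\ref{pacauto}) to the autonomous SDE (\ref{sdea}) and then apply Feller's test (Theorem \ref{feller}). First I would recognize that (\ref{pacauto}) is exactly of the form (\ref{sdea}) with drift $b$ and diffusion coefficient $\sigma(x)=x$, on the state interval $(\ell,r)=(0,\infty)$; since $\xi>0$, the process stays positive until explosion (the linear multiplicative noise cannot force the solution down through $0$ in finite time, and $b\ge 0$ pushes it up), so one really is working on $(0,\infty)$ and $\sigma^2(x)=x^2>0$ there. The explosion of $Z$ is exactly the exit through the right endpoint $r=\infty$, so in Feller's trichotomy the relevant statement is: $\tau<\infty$ a.s. if and only if $v(\infty-)<\infty$, where $v$ is the Feller function (\ref{defvF}) built from the scale function $p$ in (\ref{funfeller}). (The left endpoint $0$ is inaccessible here, so conditions $(i)$--$(iii)$ all collapse to the single requirement $v(r-)<\infty$; this should be checked but is routine.)

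The core of the argument is the explicit computation of $p$ and $v$ for $\sigma(x)=x$. With $\zeta\in(0,\infty)$ fixed, one has $p'(s)=\exp\!\left(-2\int_\zeta^s \frac{b(r)}{r^2}\,dr\right)$. Then
\begin{equation*}
v(x)=\int_\zeta^x p'(y)\int_\zeta^y \frac{2\,dz}{p'(z)\,z^2}\,dy
=\int_\zeta^x p'(y)\int_\zeta^y \frac{2}{z^2}\exp\!\left(2\int_\zeta^z\frac{b(r)}{r^2}\,dr\right)dz\,dy.
\end{equation*}
The goal is to show $v(\infty-)<\infty \iff \int_\xi^\infty \frac{ds}{2b(s)-s}<\infty$. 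The trick I would use is a change of variables exploiting $\bar b(x)=b(x)/x$: write the exponent as $2\int_\zeta^z \frac{\bar b(r)}{r}\,dr$, so $p'(z)=\exp\!\left(-2\int_\zeta^z\frac{\bar b(r)}{r}\,dr\right)$. Since $\bar b>1/2$, the integrand $\frac{\bar b(r)}{r}>\frac{1}{2r}$, which makes $p'(z)$ decay; this is what gives finiteness at $\infty$ precisely under the stated integral condition. Concretely, I expect that the double integral for $v$ can be estimated from above and below by a single integral of the form $\int_\zeta^\infty \frac{dz}{z\,\bar b(z)} \cdot (\text{bounded factor})$ — and $\int \frac{dz}{z\bar b(z)}=\int\frac{dz}{b(z)}$, whereas the condition $\int_\xi^\infty\frac{ds}{2b(s)-s}<\infty$ is the corresponding statement with the $1/2$-shift coming from the $\frac{1}{2r}$ in the exponent. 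The monotonicity of $\bar b$ is what lets one pull $\bar b(y)$ or $\bar b(z)$ out of the inner integral as a comparable constant on dyadic-type blocks, turning the iterated integral into a product and then back into a single integral; this is the standard Feller-test manipulation (cf. the computation of $v$ for power coefficients).

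The main obstacle will be the two-sided estimate of the iterated integral $\int_\zeta^x p'(y)\int_\zeta^y \frac{2}{p'(z)z^2}\,dz\,dy$ in terms of $\int_\zeta^x \frac{dy}{2b(y)-y}$. One clean route: integrate by parts in the inner integral, or differentiate $v$ and compare $v'$ with $\frac{1}{2b(y)-y}$ directly — indeed one can try to show that $v'(y)$ and $\frac{C}{2b(y)-y}$ are comparable (bounded ratio, with constants depending on $\zeta$) once $y$ is large, using that $\frac{d}{dy}\!\left(-\tfrac12 \log p'(y)\cdot\!\cdots\right)$ relates to $\bar b(y)$. The monotonicity $\bar b\uparrow$ and the bound $\bar b>1/2$ guarantee $2b(y)-y=y(2\bar b(y)-1)>0$ and eventually controls the ratio. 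Once $v'(y)\asymp (2b(y)-y)^{-1}$ for $y$ large, integrating over $(\zeta,\infty)$ gives the equivalence $v(\infty-)<\infty \iff \int^\infty (2b(s)-s)^{-1}ds<\infty$ (the lower limit $\xi$ versus $\zeta$ is irrelevant since the integrand is continuous and positive on any compact subinterval of $(0,\infty)$), and Feller's test then yields the theorem. I would also remark at the end that the condition $\bar b>1/2$ is exactly what is needed for $2b(s)-s$ to be positive, so the integral in the statement makes sense.
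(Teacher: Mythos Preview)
Your approach is the same as the paper's: apply Feller's test (Theorem~\ref{feller}) to (\ref{pacauto}) viewed as (\ref{sdea}) with $\sigma(x)=x$ on $(0,\infty)$, and reduce the question to whether $v(\infty)<\infty$. However, your treatment of the left boundary is a genuine gap. Saying that $0$ is ``inaccessible'' and that the trichotomy therefore ``collapses to the single requirement $v(r-)<\infty$'' is not correct: to land in case~$(ii)$ of Theorem~\ref{feller} you must verify $p(0+)=-\infty$, and this is \emph{not} automatic from positivity of $Z$ or from $b\ge0$ alone. The paper checks it directly: with $\zeta=\xi$, monotonicity of $\bar b$ gives $\int_s^\xi \bar b(r)\,\frac{dr}{r}\ge \bar b(s)\log(\xi/s)$, hence $p(0)\le -\int_0^\xi(\xi/s)^{2\bar b(s)}ds\le-\int_0^\xi(\xi/s)\,ds=-\infty$, where the last inequality uses $2\bar b(s)>1$. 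So the hypothesis $\bar b>1/2$ is needed here as well, not only to make $2b(s)-s>0$.

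For the estimate of $v(\infty)$, your idea of exploiting monotonicity of $\bar b$ in the inner exponential integral is exactly right, but the execution is much more direct than your sketch suggests---no dyadic blocks or comparison of $v'$ are needed. Since $\bar b(z)\le\bar b(r)\le\bar b(y)$ for $\xi\le z\le r\le y$, the exponent $\int_z^y\bar b(r)\,\frac{dr}{r}$ is sandwiched between $\bar b(z)\log(y/z)$ and $\bar b(y)\log(y/z)$, which turns the double integral into explicit power integrals. Replacing $\bar b(r)$ by $\bar b(z)$ and applying Fubini gives $v(\infty)\le 2\int_\xi^\infty(2b(z)-z)^{-1}dz$ exactly; replacing by $\bar b(y)$ gives $v(\infty)\ge 2\int_\xi^\infty(2b(y)-y)^{-1}dy$ minus a correction term that one bounds by $(2\bar b(\xi)-1)^{-2}<\infty$. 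This yields the equivalence cleanly, without any asymptotic $v'(y)\asymp(2b(y)-y)^{-1}$ argument.
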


\begin{proof}
Applying It\^{o}'s formula, to the process $R$ defined as 
\begin{equation*}
R_{t}=Z_{t}\exp \left( -W_{t}+\frac{t}{2}\right) ,\ \ t<T_{\xi }^{Z},
\end{equation*}%
we obtain%
\begin{equation}
R_{t}=\xi +\int_{0}^{t}e^{-W_{s}+\frac{s}{2}}b(e^{W_{s}-\frac{s}{2}%
}R_{s})ds,\ \ t<T_{\xi }^{Z}.  \label{auxito}
\end{equation}%
If one use that $b$ is non-negative then one see that%
\begin{equation}
Z_{t}=R_{t}\exp \left( W_{t}-\frac{t}{2}\right)>0,\ \ t\geq 0.
\label{seuspeje}
\end{equation}%
Then we will be able to prove the result using the Feller's test for
explosions with $l=0$, $r=\infty $ and $\zeta =\xi $ (see the case $(ii)$ of
Theorem \ref{feller}). The monotonicity of $\bar{b}$ turns out%
\begin{eqnarray*}
p(0) &=&-\int_{0}^{\xi }\exp \left( 2\int_{s}^{\xi }\bar{b}(r)\frac{dr}{r}%
\right) ds \\
&\leq &-\int_{0}^{\xi }\exp \left( 2\bar{b}(s)\int_{s}^{\xi }\frac{dr}{r}%
\right) ds \\
&=&-\int_{0}^{\xi }\left( \frac{\xi }{s}\right) ^{2\bar{b}(s)}ds.
\end{eqnarray*}%
Using that $\xi /s\geq 1$ and $2\bar{b}(s)-1>0$ we have%
\begin{equation*}
p(0)\leq -\int_{0}^{\xi }\frac{\xi }{s}ds=-\infty .
\end{equation*}%
Therefore from Feller's test, it is enough to show that $v(\infty )<\infty $
if and only if $\int_{\xi }^{\infty }(2b(s)-s)^{-1}ds<\infty $. By the
definition (\ref{defvF}) of $v$ we deduce%
\begin{eqnarray}
v(\infty ) &=&\int_{\xi }^{\infty }\int_{\xi }^{y}\frac{2}{z^{2}}\exp \left(
-2\int_{z}^{y}\frac{\bar{b}(r)}{r}dr\right) dzdy  \notag \\
&\geq &\int_{\xi }^{\infty }\int_{\xi }^{y}\frac{2}{z^{2}}\exp \left( -2\bar{%
b}(y)\int_{z}^{y}\frac{dr}{r}\right) dzdy  \notag \\
&=&2\int_{\xi }^{\infty }y^{-2\bar{b}(y)}\int_{\xi }^{y}\frac{dz}{z^{2-2\bar{%
b}(y)}}dy  \notag \\
&=&2\int_{\xi }^{\infty }\left\{ \frac{1}{2b(y)-y}-\frac{\xi ^{2\bar{b}%
(y)-1}y^{1-2\bar{b}(y)}}{2b(y)-y}\right\} dy.  \label{densv}
\end{eqnarray}%
Since the function $2\bar{b}-1$ is non-decreasing we obtain%
\begin{eqnarray*}
\int_{\xi }^{\infty }\frac{\xi ^{2\bar{b}(y)-1}y^{1-2\bar{b}(y)}}{2b(y)-y}dy
&=&\xi ^{-1}\int_{\xi }^{\infty }\frac{1}{2\bar{b}(y)-1}\left( \frac{\xi }{y}%
\right) ^{2\bar{b}(y)}dy \\
&\leq &\frac{\xi ^{-1}}{2\bar{b}(\xi )-1}\int_{\xi }^{\infty }\left( \frac{%
\xi }{y}\right) ^{2\bar{b}(y)}dy.
\end{eqnarray*}%
Hence, the facts that $\xi /y<1$ and $\bar{b}$ is non-decreasing lead us to%
\begin{eqnarray*}
\int_{\xi }^{\infty }\frac{\xi ^{2\bar{b}(y)-1}y^{1-2\bar{b}(y)}}{2b(y)-y}dy
&\leq &\frac{\xi ^{-1}}{2\bar{b}(\xi )-1}\int_{\xi }^{\infty }\left( \frac{%
\xi }{y}\right) ^{2\bar{b}(\xi )}dy \\
&=&\frac{1}{(2\bar{b}(\xi )-1)^{2}}.
\end{eqnarray*}%
Thus, (\ref{densv}) implies that $\int_{\xi }^{\infty
}(2b(s)-s)^{-1}ds<\infty $ if $v(\infty )<\infty $.

On the other hand, by Fubini's theorem 
\begin{eqnarray*}
v(\infty ) &\leq &\int_{\xi }^{\infty }\int_{\xi }^{y}\frac{2}{z^{2}}\exp
\left( -2\bar{b}(z)\int_{z}^{y}\frac{dr}{r}\right) dzdy \\
&=&\int_{\xi }^{\infty }\int_{\xi }^{y}\frac{2}{z^{2}}\left( \frac{y}{z}%
\right) ^{-2\bar{b}(z)}dzdy \\
&=&2\int_{\xi }^{\infty }z^{2\bar{b}(z)-2}\int_{z}^{\infty }y^{-2\bar{b}%
(z)}dydz \\
&=&2\int_{\xi }^{\infty }\frac{z^{-1}}{2\bar{b}(z)-1}dz \\
&=&2\int_{\xi }^{\infty }\frac{dz}{2b(z)-z}.
\end{eqnarray*}%
Consequently, $v(\infty )<\infty $ if $\int_{\xi }^{\infty
}(2b(s)-s)^{-1}dz<\infty $.\hfill
\end{proof}

\ 

\subsection{A generalization of Feller's test}

Now we deal with the non-autonomous stochastic differential equation%
\begin{equation}
Y_{t}=\xi +\int_{0}^{t}b(s,Y_{s})ds+\int_{0}^{t}Y_{s}dW_{s},\ \ t\geq 0,
\label{eq:sigma1}
\end{equation}%
where $\xi $ is defined as in equation (\ref{eqpmb}) and remember that the
function $b$ satisfies the condition \textbf{H1}. Henceforth we will use the
notation 
\begin{equation*}
\tilde{b}(\omega ,t,x)=\frac{b(\omega ,t,e^{x})}{e^{x}},\ \ (\omega ,t,x)\in
\Omega \times \lbrack 0,\infty )\times \mathbb{R}.
\end{equation*}

\begin{theorem}
\label{trespuntodos} Let $c\geq 0$ and suppose that with probability one the
function $\tilde{b}:{\Omega \times \lbrack }0,\infty )\times {\mathbb{R}}%
\rightarrow {\mathbb{R}}$ satisfy
\newline
$(i)$ for each $x\in \mathbb{R}$, $\tilde{b}(\cdot ,x):{\Omega \times
\lbrack }0,\infty )\rightarrow {\mathbb{R}}$ is non-decreasing (in the time
component),$\newline
(ii)$ for each $t\in \lbrack 0,\infty )$, $\tilde{b}(t,\cdot ):{\Omega
\times (c,\infty )}\rightarrow {\mathbb{R}}$ is non-decreasing (in the space
component),$\newline
(iii)$ for each $(t,x)\in \lbrack 0,\infty )\times {\mathbb{R}}$, $\tilde{b}%
(t,x)\geq 1/2$ and for each $(t,x)\in \lbrack 0,\infty )\times (c,\infty )$, 
$\tilde{b}(t,x)>1/2$. \newline
Then, for almost all $\omega _{0}$ in 
\begin{eqnarray*}
{\tilde{\Omega}}=\{\omega \in \Omega  &:&W_{\cdot }(\omega )\ \text{is
continuous and }b(\omega ,\cdot ),\ \tilde{b}(\omega ,\cdot )\text{ satisfy} \\
&&\text{the above hypotheses, }\xi (\omega )>0\},
\end{eqnarray*}%
the solution $Y_{\cdot }(\omega _{0})$ of equation (\ref{eq:sigma1})
explodes in finite time if and only if 
\begin{equation}
\int_{\theta }^{\infty }\frac{ds}{2b(\omega _{0},a,s)-s}<\infty ,\ \ \forall
\theta >e^{c},  \label{conexpmfea}
\end{equation}%
for some $a>0$.
\end{theorem}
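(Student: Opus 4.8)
The plan is to reduce the stochastic equation (\ref{eq:sigma1}) to a pathwise ordinary differential equation via the same exponential transformation used in the proof of Theorem \ref{VillaFeller}, and then invoke the non-autonomous Osgood criterion (the Proposition preceding Section \ref{Sec:main}). First I would fix $\omega_0 \in \tilde\Omega$ and, following (\ref{seuspeje}), observe that the solution $Y_\cdot(\omega_0)$ stays strictly positive, so we may write $Y_t = e^{U_t}$ for a continuous process $U$ with $U_0 = \ln\xi(\omega_0)$. Applying It\^o's formula to $U_t = \ln Y_t$ gives
\begin{equation*}
U_t = \ln\xi + \int_0^t \left( \tilde b(s,U_s) - \tfrac12 \right) ds + W_t, \qquad t < T^Y_\xi.
\end{equation*}
Now subtract the Brownian path: set $V_t = U_t - W_t(\omega_0)$. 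Then $V$ is (pathwise) differentiable and solves $V_t' = \tilde b(t, V_t + W_t) - \tfrac12$, with $V_0 = \ln\xi$. The key point is that $Y$ explodes in finite time if and only if $U$ does, if and only if $V$ does, because $W_\cdot(\omega_0)$ is a finite continuous function on any bounded interval; so the explosion problem for $Y$ is equivalent to the explosion problem for the random ODE for $V$.

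Next I would set up a comparison to remove the pesky $W_t$ appearing inside $\tilde b$. Define $\beta(\omega_0, t, v) := \tilde b(\omega_0, t, v + W_t(\omega_0)) - \tfrac12$. Hypotheses $(i)$ and $(iii)$ give $\beta \geq 0$ (and $>0$ on the relevant region), which forces $V$ to be non-decreasing, hence $V_t \geq \ln\xi > c$ for all $t$, so we are genuinely in the region where the monotonicity hypothesis $(ii)$ applies. However $\beta$ is not monotone in $t$ (because $W_t$ is not), so I cannot apply the non-autonomous Osgood Proposition directly to $\beta$. The remedy is a two-sided sandwich on a finite time horizon: on any interval $[0,T]$ with $T < T^Y_\xi$, the continuity of $W_\cdot(\omega_0)$ gives finite constants $m_T = \min_{[0,T]} W_\cdot(\omega_0)$ and $M_T = \max_{[0,T]} W_\cdot(\omega_0)$, and monotonicity of $\tilde b$ in the space variable yields
\begin{equation*}
\tilde b(t, V_t + m_T) - \tfrac12 \leq V_t' \leq \tilde b(t, V_t + M_T) - \tfrac12, \qquad t \in [0,T].
\end{equation*}
Then I would compare $V$ with the solutions of $v_\pm'(t) = \tilde b(t, v_\pm(t) + m_T \text{ or } M_T) - \tfrac12$ using Lemma \ref{lem:comp}, and note that after the harmless shift $w_\pm = v_\pm + m_T$ (resp. $M_T$) these are genuine non-autonomous ODEs of the form (\ref{eq_propo}) with coefficient $\tilde b(t, w) - \tfrac12$, which is non-negative, continuous, and non-decreasing by components on $[0,\infty) \times (c,\infty)$ thanks to $(i)$--$(iii)$. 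The non-autonomous Osgood Proposition applies to each, and since $\tilde b(a, w) - \tfrac12 = \frac{b(a, e^w) - e^w/2}{e^w} = \frac{2b(a,e^w) - e^w}{2e^w}$, the substitution $s = e^w$ turns $\int^\infty \frac{dw}{\tilde b(a,w) - 1/2}$ into a constant multiple of $\int^\infty \frac{ds}{2b(a,s) - s}$ (the extra factor $1/s$ from $dw = ds/s$ only helps integrability, and a routine estimate like the one bounding the second term in (\ref{densv}) shows the two integrals converge or diverge together).

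The remaining bookkeeping is: explosion of $V$ on $[0,\infty)$ is equivalent to explosion in finite time of one of the bounding solutions $w_\pm$ (upper bound $w_+$ for the "explodes" direction, lower bound $w_-$ for the "does not explode" direction), which by the non-autonomous Osgood Proposition is equivalent to $\int_\theta^\infty \frac{ds}{2b(\omega_0,a,s) - s} < \infty$ for some $a > 0$ and some (hence, by monotonicity in $a$ and in $s$, any) $\theta > e^c$. One subtlety worth spelling out carefully: the Osgood condition "for some $a$" is insensitive to which $\theta > e^c$ is chosen because $b(\omega_0, a, \cdot)$ is non-decreasing and the integrand is locally bounded on $(e^c, \infty)$; and passing from "some $a$" in the ODE conclusion to "some $a$" in (\ref{conexpmfea}) is immediate. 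I expect the main obstacle to be precisely the non-monotonicity of $t \mapsto W_t$, i.e. justifying that the finite-horizon sandwich above really does transfer the global (infinite-horizon) explosion dichotomy — one must argue that $V$ explodes iff it explodes "within some finite window that we then bound", which is fine since explosion is by definition the blow-up of $\sup\{t : V_t < \infty\}$ and on any $[0,T]$ strictly before that time the bounds hold with finite $m_T, M_T$. The rest is the elementary change of variables $s = e^w$ together with the already-established comparison Lemma \ref{lem:comp} and non-autonomous Osgood Proposition.
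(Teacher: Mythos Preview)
Your reduction via $U_t=\ln Y_t$ (and the harmless further subtraction $V_t=U_t-W_t$) is exactly the paper's transformation; the paper writes $Z_t=\log Y_t$ and absorbs $W$ into constants rather than subtracting it, but the effect is the same. For the necessity direction your outline matches the paper's argument: bound $W$ by its supremum on $[0,T_\xi^Y]$, compare with an upper autonomous solution, and invoke Osgood. Two small corrections: (a) $\xi(\omega_0)>0$ does \emph{not} give $\ln\xi>c$ (recall $c\ge 0$), so you cannot assert $V_t>c$ from time $0$; the paper waits until a time $T$ after which $Z_t>c$, which is possible because $Z$ blows up to $+\infty$. (b) The substitution $s=e^w$ turns $\int\frac{dw}{\tilde b(a,w)-1/2}$ \emph{exactly} into $2\int\frac{ds}{2b(a,s)-s}$; no residual factor and no estimate of the type in (\ref{densv}) is needed.

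The genuine gap is in the sufficiency direction. Your plan is: if $V$ does not explode, use the lower sandwich $V'\ge\tilde b(t,V_t+m_T)-\tfrac12$ on $[0,T]$, compare with $w_-$, and conclude the Osgood integral diverges. But $m_T=\min_{[0,T]}W_\cdot(\omega_0)\to-\infty$ as $T\to\infty$ almost surely, so the shifted initial value $\ln\xi+m_T$ eventually leaves $(c,\infty)$; and even when it does not, the comparison only tells you that $w_-^{(T)}$ fails to explode on $[0,T]$ --- it may well explode shortly after $T$, so no contradiction with finiteness of the Osgood integral follows. Letting $T\to\infty$ only makes the lower bound weaker. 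The paper avoids this with a different idea: by the law of the iterated logarithm (Theorem~4.3 in \cite{lionjose}) one finds $t_n\uparrow\infty$ with $\inf_{0\le h\le 1}W_{t_n+h}(\omega_0)\uparrow\infty$. Restricting to each unit window $[t_n,t_n+1]$ gives a lower comparison with initial value $\tilde m_n\to+\infty$, and non-explosion of $Z$ on that window forces the autonomous comparison solution to survive at least time $1$, hence $\int_{e^{\tilde m_n}}^\infty\frac{ds}{2b(\omega_0,a,s)-s}\ge\tfrac12$ for all large $n$. Since $e^{\tilde m_n}\to\infty$, the tail of the integral does not tend to $0$, so the integral diverges. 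Your global $[0,T]$ sandwich cannot reproduce this, because it is governed by $\min_{[0,T]}W$ rather than by the large upward excursions of $W$.
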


\begin{remark}
$(a)$ Note that $a$ depends on $\omega _{0}.$\newline
$(b)$ If with probability one $\tilde{b}(t,x)>1/2$ for each $(t,x)\in
\lbrack 0,\infty )\times {\mathbb{R}}$, then (\ref{conexpmfea}) is
equivalent to%
\begin{equation*}
\int_{\xi }^{\infty }\frac{ds}{2b(\omega _{0},a,s)-s}<\infty .
\end{equation*}
\end{remark}

\begin{proof}[Proof of Theorem \protect\ref{trespuntodos}]
Applying It\^{o}'s formula as in (\ref{auxito}) to 
\begin{equation}
R_{t}=Y_{t}\exp \left( -W_{t}+\frac{t}{2}\right) ,\ \ 0\leq t<T_{\xi }^{Y},
\label{otraaucex}
\end{equation}%
and using that $b\geq 0$ we obtain a non-decreasing process $R$ given by%
\begin{equation}
R_{t}=\xi +\int_{0}^{t}e^{-W_{s}+\frac{s}{2}}b(s,e^{W_{s}-\frac{s}{2}%
}R_{s})ds,\ \ 0\leq t<T_{\xi }^{Y}.  \label{auxva}
\end{equation}%
Then (\ref{otraaucex}) implies that $Y>0$, thus the process 
\begin{equation}
Z_{t}=\log (Y_{t}),\ \ 0\leq t<T_{\xi }^{Y},  \label{defZ}
\end{equation}%
is well defined and $T_{\log \xi }^{Z}=T_{\xi }^{Y}$. We can apply again It%
\^{o}'s formula to obtain%
\begin{equation}
Z_{t}=\log \xi +\int_{0}^{t}\tilde{b}(s,Z_{s})ds+W_{t}-\frac{t}{2},\ \ 0\leq
t<T_{\log \xi }^{Z}.  \label{eqauxl}
\end{equation}

Now fix $\omega _{0}\in {\tilde{\Omega}}$, for which the expression (\ref{eqauxl}) is satisfied.

\textit{Necessity}: Let us suppose that $T_{\xi }^{Y}(\omega _{0})<\infty $.
Because $Y(\omega _{0})>0$, then $Y(\omega _{0})$ explodes to $+\infty $,
hence $T_{\log \xi }^{Z}(\omega _{0})<\infty $ and $Z(\omega _{0})$ explodes
to $+\infty $. Therefore we can find a $T\in (0,T_{\log \xi }^{Z}(\omega
_{0}))$, such that 
\begin{equation*}
Z_{t}(\omega _{0})>c,\ \ \forall t\in \lbrack T,T_{\log \xi }^{Z}(\omega
_{0})].
\end{equation*}%
We rewrite equation (\ref{eqauxl}) as%
\begin{multline*}
Z_{T+t}(\omega _{0})=Z_{T}(\omega _{0})+W_{T+t}(\omega _{0})-W_{T}(\omega
_{0})+\int_{T}^{T+t}\left\{ \tilde{b}(\omega _{0},s,Z_{s})-\frac{1}{2}%
\right\} ds, \\
0\leq t<T_{\log \xi }^{Z}(\omega _{0})-T.
\end{multline*}%
Setting $y(t)=Z_{T+t}(\omega _{0})$ we have%
\begin{multline*}
y(t)=Z_{T}(\omega _{0})+W_{T+t}(\omega _{0})-W_{T}(\omega
_{0})+\int_{0}^{t}\left\{ \tilde{b}(\omega _{0},T+s,y(s))-\frac{1}{2}%
\right\} ds, \\
0\leq t<T_{\log \xi }^{Z}(\omega _{0})-T.
\end{multline*}%
The fact that $\tilde{b}$ is non-decreasing in the time variable bring about
the inequality%
\begin{equation*}
y(t)\leq M+\int_{0}^{t}\left\{ \tilde{b}(\omega _{0},T_{\log \xi
}^{Z}(\omega _{0}),y(s))-\frac{1}{2}\right\} ds,\ \ 0\leq t<T_{\log \xi
}^{Z}(\omega _{0})-T,
\end{equation*}%
with 
\begin{equation*}
M=Z_{T}(\omega _{0})+2\sup \left\{ |W_{t}(\omega _{0})|:t\in \lbrack
0,T_{\log \xi }^{Z}(\omega _{0})]\right\} +c.
\end{equation*}%
Consider the integral equation%
\begin{equation*}
x(t)=M+\int_{0}^{t}\left\{ \tilde{b}(\omega _{0},T_{\log \xi }^{Z}(\omega
_{0}),x(s))-\frac{1}{2}\right\} ds,\ \ t\geq 0.
\end{equation*}%
Lemma \ref{lem:comp} $(ii)$ yields $T_{M}^{x}\leq T_{\log \xi }^{Z}(\omega
_{0})$. Since $M>c$ the Proposition \ref{osgood} implies 
\begin{equation*}
2\int_{M}^{\infty }\frac{ds}{2\tilde{b}(\omega _{0},T_{\log \xi }^{Z}(\omega
_{0}),s)-1}<\infty 
\end{equation*}%
and therefore the continuity of $b$ gives 
\begin{equation*}
\int_{\theta }^{\infty }\frac{ds}{2b(\omega _{0},T_{\log \xi }^{Z}(\omega
_{0}),s)-s}<\infty ,\ \ \forall \theta >e^{c}.
\end{equation*}

\textit{Sufficiency}: Now let us assume $T_{\xi }^{Y}(\omega _{0})=\infty $
and take $a>0$ fix. As before, $Y_{t}(\omega _{0})>0$, for each $t\geq 0$,
and (\ref{defZ}) turns out, $T_{\log \xi }^{Z}(\omega _{0})=\infty $. By the
law of iterated logarithm (see for instance Theorem 4.3 in Le\'{o}n and
Villa \cite{lionjose}) we can find a sequence $\{t_{n}:n\in {\mathbb{N}}\}$
such that $a\leq t_{n}\uparrow \infty $ and 
\begin{equation}
\inf \{W_{h+t_{n}}(\omega _{0}):0\leq h\leq 1\}\uparrow \infty ,\ \
n\rightarrow \infty .  \label{neq:suc}
\end{equation}%
Hence from equation (\ref{eqauxl}) and using the hypothesis that $\tilde{b}$
satisfies, we obtain%
\begin{equation}
Z_{t+t_{n}}(\omega _{0})\geq \tilde{m}_{n}+\int_{0}^{t}\left\{ \tilde{b}%
(\omega _{0},a,Z_{s+t_{n}}(\omega _{0}))-\frac{1}{2}\right\} ds,\ \ t\in
\lbrack 0,1],  \label{auxparan}
\end{equation}%
where 
\begin{equation*}
\tilde{m}_{n}=\log \xi (\omega _{0})+\inf \{W_{h+t_{n}}(\omega _{0}):0\leq
h\leq 1\}.
\end{equation*}%
Observe that (\ref{neq:suc}) implies that $\tilde{m}_{n}>c$, for all $n$
large enough. Then Lemma \ref{lem:comp} $(i)$ implies that the explosion
time $T_{\tilde{m}_{n}}^{u}$ of 
\begin{equation*}
u(t)=\tilde{m}_{n}+\int_{0}^{t}\left\{ \tilde{b}(\omega _{0},a,u(s))-\frac{1%
}{2}\right\} ds,\ \ t\geq 0,
\end{equation*}%
is bigger or equal than $1$. Hence 
\begin{equation*}
2\int_{\tilde{m}_{n}}^{\infty }\frac{ds}{2\tilde{b}(\omega _{0},a,s)-1}\geq
1.
\end{equation*}%
Then (\ref{neq:suc}) necessary gives 
\begin{equation*}
\int_{\theta }^{\infty }\frac{ds}{2b(\omega _{0},a,s)-s}=\infty ,\ \ \forall
\theta >e^{c}.
\end{equation*}%
Thus the proof is complete.\hfill
\end{proof}

\bigskip

Now we present other Osgood type criteria.

\begin{proposition}
\label{Propidea}Let $c\in {\mathbb{R}}$ and assume that with probability one
the function $b:{\Omega \times \lbrack }0,\infty )\times {\mathbb{R}}%
\rightarrow {\mathbb{R}}$ satisfy
\newline
$(i)$ $b$ is non-decreasing by components,$\newline
(ii)$ for each $(t,x)\in \lbrack 0,\infty )\times (c,\infty )$, $b(t,x)>0$.$%
\newline
$For almost all $\omega _{0}$ in 
\begin{eqnarray*}
{\tilde{\Omega}}=\{\omega \in \Omega  &:&W_{\cdot }(\omega )\ \text{is
continuous and }b(\omega ,\cdot )\text{ satisfy} \\
&&\text{the above hypotheses, }\xi (\omega )>c\},
\end{eqnarray*}%
if the solution $Y(\omega _{0})$ of (\ref{eq:sigma1}) explodes in finite
time, then%
\begin{equation*}
\int_{\theta }^{\infty }\frac{ds}{b(\omega _{0},a,s)}<\infty ,\ \ \forall
\theta >c,
\end{equation*}%
for some $a>0$.
\end{proposition}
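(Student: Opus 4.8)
The plan is to reduce the explosion of $Y$ to an explosion statement for an ordinary differential equation, exactly mimicking the \emph{necessity} half of the proof of Theorem \ref{trespuntodos}, but now with $\tilde b$ replaced by $b$ itself (so we do not need to pass to $Z=\log Y$). First I would fix $\omega_0\in\tilde\Omega$ and suppose the solution $Y(\omega_0)$ of (\ref{eq:sigma1}) explodes at time $T:=T_\xi^Y(\omega_0)<\infty$. Since $b\ge 0$, the argument of Theorem \ref{trespuntodos} shows via It\^o's formula applied to $R_t=Y_t\exp(-W_t+t/2)$ that $R$ is non-decreasing, hence $Y>0$ on $[0,T)$ and $Y$ explodes to $+\infty$; in particular there is $T_0\in(0,T)$ with $Y_t(\omega_0)>c$ for all $t\in[T_0,T)$ (here I am using $c\in\mathbb R$ and the fact that $Y_t\to+\infty$; if $c<0$ one may simply take $T_0=0$ after noting $Y>0$, but the general $c$ is handled as in the cited proof).

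Next I would rewrite (\ref{eq:sigma1}) on the interval $[T_0,T)$ as
\begin{equation*}
Y_{T_0+t}(\omega_0)=Y_{T_0}(\omega_0)+\int_0^t b(\omega_0,T_0+s,Y_{T_0+s}(\omega_0))\,ds+\int_{T_0}^{T_0+t}Y_s(\omega_0)\,dW_s(\omega_0).
\end{equation*}
Setting $y(t)=Y_{T_0+t}(\omega_0)$, and using the non-decreasing-in-time assumption $b(\omega_0,T_0+s,x)\le b(\omega_0,T,x)$ together with a crude bound on the stochastic-integral term on the finite interval $[0,T]$, I obtain
\begin{equation*}
y(t)\le M+\int_0^t b(\omega_0,T,y(s))\,ds,\qquad 0\le t<T-T_0,
\end{equation*}
where $M$ is a finite constant chosen larger than $c$, e.g. involving $Y_{T_0}(\omega_0)$ and $\sup\{|\int_0^s Y_r(\omega_0)\,dW_r(\omega_0)|:s\in[0,T]\}$ (finite by continuity of the stochastic integral along this fixed path). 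Because $y(t)\to+\infty$ as $t\uparrow T-T_0$ and $b(\omega_0,T,\cdot)>0$ on $(c,\infty)$, the function $b(\omega_0,T,\cdot)$ is continuous, non-decreasing and positive there, so Lemma \ref{lem:comp}$(ii)$ applies to compare $y$ with the solution $x$ of the autonomous equation $x(t)=M+\int_0^t b(\omega_0,T,x(s))\,ds$: it gives $T_M^x\le T-T_0<\infty$, i.e. $x$ explodes in finite time.

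Finally, Osgood's criterion (Proposition \ref{osgood}) applied to $x$ yields $\int_M^\infty \frac{ds}{b(\omega_0,T,s)}<\infty$, and since $b(\omega_0,T,\cdot)$ is non-decreasing one removes the dependence on $M$: for any $\theta>c$ split $\int_\theta^\infty=\int_\theta^M+\int_M^\infty$ (or use $b(\omega_0,T,s)\ge b(\omega_0,T,M')$ on a bounded piece) to conclude $\int_\theta^\infty\frac{ds}{b(\omega_0,T,s)}<\infty$ for every $\theta>c$. Taking $a:=T=T_\xi^Y(\omega_0)>0$ gives the claim (note $a$ depends on $\omega_0$, consistent with Remark $(a)$). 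The main obstacle is purely technical: justifying the passage $y(t)\le M+\int_0^t b(\omega_0,T,y(s))\,ds$ requires that the stochastic-integral term, which is not monotone, be absorbed into the constant $M$ uniformly on $[0,T]$ — this is legitimate because we work pathwise for fixed $\omega_0$ and the stochastic integral has continuous sample paths, but one must also ensure $y$ stays above $c$ so that the monotonicity and positivity hypotheses on $b$ in Lemma \ref{lem:comp}$(ii)$ are genuinely available on the whole comparison interval, which is why the cutoff time $T_0$ is introduced.
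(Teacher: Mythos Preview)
Your argument has a genuine gap at the step where you absorb the stochastic integral into a constant. You claim that
\[
M \;\ge\; \sup\Bigl\{\Bigl|\int_0^s Y_r(\omega_0)\,dW_r(\omega_0)\Bigr|:s\in[0,T]\Bigr\}
\]
is finite ``by continuity of the stochastic integral along this fixed path''. But the integrand $Y_r$ is exploding as $r\uparrow T$, so the stochastic integral is only defined and continuous on $[0,T)$; there is no reason for it to be bounded there. Indeed, from the equation itself $\int_0^t Y_s\,dW_s = Y_t-\xi-\int_0^t b(s,Y_s)\,ds$, and if the drift integral stays finite then the stochastic integral must blow up to $+\infty$. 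So the comparison inequality $y(t)\le M+\int_0^t b(\omega_0,T,y(s))\,ds$ is not established.

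This is precisely why the paper does \emph{not} work with $Y$ directly. In the necessity half of Theorem~\ref{trespuntodos} the passage to $Z=\log Y$ replaces the multiplicative noise by the additive term $W_t-\tfrac{t}{2}$, which is trivially bounded on $[0,T]$ because $W$ is continuous. In the proof of Proposition~\ref{Propidea} the paper instead passes to $R_t=Y_t\exp(-W_t+t/2)$, which satisfies a \emph{pathwise} integral equation with no stochastic integral at all (equation (\ref{auxva})). The comparison is then carried out for $R$ (after a rescaling $y=MR_{T+t}$), where the only ``noise'' remaining is the bounded factor $e^{W_s-s/2}$, controlled via $m=\inf e^{W_s-s/2}$ and $M=\sup e^{W_s-s/2}$ on the compact interval $[0,T_\xi^Y(\omega_0)]$. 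Your outline can be repaired by switching to $R$ in exactly this way; the rest of your strategy (cutoff time to ensure the argument of $b$ exceeds $c$, Lemma~\ref{lem:comp}(ii), Osgood) is correct.
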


\begin{proof}
By hypothesis $T_{\xi }^{Y}(\omega _{0})<\infty $. The continuity of $%
W_{\cdot }(\omega _{0})$ implies that 
\begin{equation*}
m=\inf \{e^{W_{s}(\omega _{0})-\frac{s}{2}}:s\in \lbrack 0,T_{\xi
}^{Y}(\omega _{0})]\}>0.
\end{equation*}%
Since $R(\omega _{0})$ explodes to $+\infty $, then there exists a $%
0<T<T_{\xi }^{Y}(\omega _{0})$ such that 
\begin{equation}
R_{s}(\omega _{0})>\frac{c}{m},\ \ T\leq s<T_{\xi }^{Y}(\omega _{0}).
\label{cetsR}
\end{equation}%
From (\ref{auxva}) we see that 
\begin{multline*}
R_{T+t}(\omega _{0})=R_{T}(\omega _{0})+\int_{0}^{t}e^{-W_{T+s}(\omega _{0})+%
\frac{T+s}{2}}b(\omega _{0},T+s,e^{W_{T+s}(\omega _{0})-\frac{T+s}{2}}R_{T+s})ds, \\
0\leq t<T_{\xi }^{Y}(\omega _{0})-T.
\end{multline*}%
The condition (\ref{cetsR}) implies that 
\begin{equation}
MR_{T+s}(\omega _{0})\geq e^{W_{T+s}(\omega _{0})-\frac{T+s}{2}}R_{T+s}\geq
mR_{T+s}>c,\ \ 0\leq s<T_{\xi }^{Y}(\omega _{0})-T,  \label{audR}
\end{equation}%
where
\begin{equation*}
M=R_{T}+\exp \left( \sup_{t\in[0, T_{\xi}^{Y}(\omega _{0})]} |W_{t}(\omega _{0})| +\frac{T_{\xi }^{Y}(\omega _{0})}{2}\right) +c+1.
\end{equation*}%
Now, using the hypothesis $(i)$ we get
\begin{equation*}
R_{T+t}(\omega _{0})\leq M+\int_{0}^{t}Mb(\omega _{0},T_{\xi }^{Y}(\omega
_{0}),MR_{T+s}(\omega _{0}))ds,\ \ 0\leq t<T_{\xi }^{Y}(\omega _{0})-T.
\end{equation*}%
Let us define $y(t)=MR_{T+t}(\omega _{0})$, then the previous inequality
leads to%
\begin{equation*}
y(t)\leq M^{2}+\int_{0}^{t}M^{2}b(\omega _{0},T_{\xi }^{Y}(\omega _{0}),y(s))ds,\ \
0\leq t<T_{\xi }^{Y}(\omega _{0})-T.
\end{equation*}%
We consider the integral equation%
\begin{equation*}
x(t)=M^{2}+\int_{0}^{t}M^{2}b(\omega _{0},T_{\xi }^{Y}(\omega
_{0}),x(s))ds,\ \ t\geq 0.
\end{equation*}%
It is clear that $M^{2}>c$ and (\ref{audR}) yields $y>c$ on $[0,T_{\xi
}^{Y}(\omega _{0})-T)$, hence by Lemma \ref{lem:comp} $(ii)$ we deduce that $x(t)\geq
MR_{T+t}(\omega _{0})$, for all $0\leq t<T_{\xi }^{Y}(\omega _{0})-T$, therefore by
Osgood criterion's we obtain%
\begin{equation*}
\int_{M^{2}}^{\infty }\frac{ds}{M^{2}b(\omega _{0},T_{\xi }^{Y}(\omega
_{0}),s)}<\infty .
\end{equation*}%
The result follows from the continuity of $b$ and hypothesis $(ii)$.\hfill 
\end{proof}

\bigskip

\begin{example}
Consider the equation%
\begin{equation}
Y_{t}=1+\frac{1}{2}\int_{0}^{t}Y_{s}^{2}ds+\int_{0}^{t}Y_{s}dW_{s},\ \ t\geq
0.  \label{exem}
\end{equation}%
Proceeding as in (\ref{seuspeje}) we deduce that $Y>0$. Therefore we can use
Feller's test (Theorem \ref{feller}) to see the explosive behavior of $Y$ in 
$(0,\infty ]$. In this case, by equation (\ref{funfeller}) we see that%
\begin{eqnarray*}
p(0) &=&-\int_{0}^{1}\exp \left( \int_{s}^{1}dr\right) ds \\
&=&1-e>-\infty 
\end{eqnarray*}%
and by Fubini's theorem equation (\ref{defvF}) can be written as%
\begin{eqnarray*}
v(0) &=&2\int_{0}^{1}\int_{y}^{1}\frac{\exp \left( z-y\right) }{z^{2}}dzdy \\
&\geq &2\int_{0}^{1}\int_{y}^{1}\frac{1}{z^{2}}dzdy \\
&=&2\int_{0}^{1}\frac{1}{z}dz=\infty .
\end{eqnarray*}%
Thus the solution $Y$ of (\ref{exem}) does not blow up in finite time with
positive probability. However note that 
\begin{equation*}
\int_{\theta }^{\infty }\frac{dr}{r^{2}}<\infty ,\ \ \forall \theta >0,
\end{equation*}%
hence we do not have the converse of Proposition \ref{Propidea}. As we shall
see in Proposition \ref{noconver} the reason of this singularity is that 
\begin{equation}
\int_{0}^{\infty }\frac{dr}{r^{2}}=\infty .  \label{caseinver}
\end{equation}
\end{example}

\bigskip

We have the converse of Proposition \ref{Propidea} if
the corresponding integral is divergent, that is nothing similar to case (%
\ref{caseinver}).

\begin{proposition}
\label{noconver}Assume that with probability 1 the function $\tilde{b}:{%
\Omega \times \lbrack }0,\infty )\allowbreak \times \allowbreak {\mathbb{R}}%
\rightarrow {\mathbb{R}}$ satisfy
\newline
$(i)$ $\tilde{b}$ is non-decreasing by components,$\newline
(ii)$ for each $(t,x)\in \lbrack 0,\infty )\times (0,\infty )$, $b(t,x)>0$.$%
\newline
$Then the solution $Y(\omega _{0})$ of (\ref{eq:sigma1}) explodes in finite
time if 
\begin{equation*}
\int_{0}^{\infty }\frac{ds}{b(\omega _{0},a,s)}<\infty 
\end{equation*}%
for some $a>0$. Here $\omega _{0}$ is in the set, of probability 1, 
\begin{eqnarray*}
{\tilde{\Omega}}=\{\omega \in \Omega  &:&W_{\cdot }(\omega )\ \text{is
continuous and }\tilde{b}(\omega ,\cdot )\text{, }b(\omega ,\cdot )\text{
satisfy} \\
&&\text{the above hypotheses, }\xi (\omega )>0\}.
\end{eqnarray*}
\end{proposition}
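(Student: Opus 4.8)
The plan is to prove the statement directly: assuming $\int_{0}^{\infty}\frac{ds}{b(\omega_0,a,s)}<\infty$ for some $a>0$, I would exhibit a finite explosion time for $Y(\omega_0)$. First I would reuse the reductions from the proof of Theorem \ref{trespuntodos}: set $R_t=Y_te^{-W_t+t/2}$, which is non-decreasing with $R_t\ge\xi$, and $Z_t=\log Y_t$, which is well defined since $Y>0$, with $T_{\log\xi}^{Z}=T_{\xi}^{Y}$ and $Z_t=\log\xi+\int_0^t\tilde b(s,Z_s)\,ds+W_t-\frac t2$. Substituting $s=e^x$ rewrites the hypothesis as $\int_{-\infty}^{\infty}\frac{dx}{\tilde b(\omega_0,a,x)}<\infty$; combined with the monotonicity and positivity of the coefficient, a short dyadic‑block estimate forces $b(\omega_0,a,s)/s\to\infty$ both as $s\to0^{+}$ and as $s\to\infty$, i.e.\ $\tilde b(\omega_0,a,\cdot)$ is bounded below by a positive constant and exceeds $1/2$ outside a compact set. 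This is the feature the Example lacks, and it is exactly where the lower limit $0$ (rather than a general $\theta$) is used.

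Next I would freeze the coefficient at time $a$. For $t\ge a$, monotonicity of $\tilde b$ in the time variable gives $Z_t\ge Z_a+\int_a^t\bigl(\tilde b(\omega_0,a,Z_s)-\tfrac12\bigr)ds+(W_t-W_a)$, so $Z$ dominates the solution $\zeta$ of $\zeta_t=Z_a+\int_a^t\bigl(\tilde b(\omega_0,a,\zeta_s)-\tfrac12\bigr)ds+(W_t-W_a)$. The domination $Z_t\ge\zeta_t$ on the common interval of existence should follow by subtracting the common Brownian term and running the argument of Lemma \ref{lem:comp} $(i)$ for the resulting ODE, whose right-hand side $x\mapsto\tilde b(\omega_0,a,W_t+x)-\tfrac12$ is non-decreasing in $x$; consequently, if $\zeta$ reaches $+\infty$ at a finite time $\tau$, then $T_{\log\xi}^{Z}(\omega_0)\le\tau$. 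Thus it is enough to show $\zeta$ explodes in finite time.

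For that I would undo the exponential change of variables: put $\bar Y_t=e^{\zeta_{t+a}}$ and $\hat W_t=W_{t+a}-W_a$; by It\^{o}'s formula (reversing the computation leading to (\ref{auxito})) $\bar Y$ solves the autonomous equation (\ref{pacauto}) with coefficient $b(\omega_0,a,\cdot)$ and initial value $Y_a(\omega_0)>0$. Since $x\mapsto b(\omega_0,a,x)/x=\tilde b(\omega_0,a,\log x)$ is non-decreasing and $>1/2$ off a compact set, I would run the Feller computation of Theorem \ref{VillaFeller} essentially verbatim, with $\ell=0$, $r=\infty$ and the reference point chosen where $\tilde b(\omega_0,a,\cdot)>1/2$: this yields $p(0+)=-\infty$ and, using $2b(\omega_0,a,s)-s\ge\tfrac32 b(\omega_0,a,s)$ for large $s$ (valid because $b(\omega_0,a,s)/s\to\infty$), also $v(\infty)<\infty$. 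Case $(ii)$ of Theorem \ref{feller} then gives that $\bar Y$, hence $\zeta$, explodes in finite time with probability one; since this holds for $\mathbb P$-a.e.\ realization of the driving Brownian motion, it holds for $\mathbb P$-a.e.\ $\omega_0\in\tilde\Omega$, which is what is claimed.

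The delicate points I expect are: (a) the pathwise comparison $Z\ge\zeta$, which needs a version of Lemma \ref{lem:comp} in which the (state-monotone) right-hand side also depends explicitly on $t$ through the Brownian path — routine, but worth isolating; and (b) the boundary analysis at $0$ in the Feller step, where $p(0+)=-\infty$ requires $\tilde b(\omega_0,a,x)>1/2$ for all sufficiently negative $x$, precisely what the Example violates and what the hypothesis $\int_0^{\cdot}\frac{ds}{b(\omega_0,a,s)}<\infty$ restores. (It is also worth checking that, for this hypothesis to be non-vacuous, one wants $(i)$ to read ``$b$ is non-decreasing by components,'' as in Proposition \ref{Propidea}; the argument above uses only that monotonicity together with positivity of $b$ on $(0,\infty)$.) An alternative route, closer in spirit to the ``sufficiency'' part of the proof of Theorem \ref{trespuntodos}, would bypass Feller's test: use the law of iterated logarithm to produce intervals $[t_n,t_n+1]$ on which $W$ is large, observe that the lower bound on $\tilde b$ near $-\infty$ keeps $Z$ from drifting to $-\infty$ and forces $\log R_t$ to grow at least linearly with slope $\ge 1/2$, so that $Z_{t_n+\cdot}$ starts from levels tending to $+\infty$, and then conclude from Lemma \ref{lem:comp} $(i)$ and Osgood's criterion that $Z$ cannot remain finite.
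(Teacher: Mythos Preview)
Your main route diverges substantially from the paper's, and it contains an internal inconsistency. The paper argues by contrapositive and is very short: assume $T_{\xi}^{Y}(\omega_0)=\infty$, write $Z_t=\log Y_t$ as in (\ref{eqauxl}), and for each $n$ use only $\tilde b\ge 0$ and time--monotonicity to get
\[
Z_{t+a}(\omega_0)\ \ge\ m_n+\int_0^t \tilde b(\omega_0,a,Z_{s+a}(\omega_0))\,ds,\qquad t\in[0,n],
\]
with $m_n=\log\xi(\omega_0)-\sup_{t\in[0,n]}|W_{t+a}(\omega_0)|-\tfrac{n+a}{2}$. Comparing with the autonomous ODE $x'=\tilde b(\omega_0,a,x)$, $x(0)=m_n$, Lemma~\ref{lem:comp}\,(i) and Proposition~\ref{osgood} give $\int_{e^{m_n}}^{\infty}\frac{ds}{b(\omega_0,a,s)}\ge n$; since $e^{m_n}\downarrow 0$, letting $n\to\infty$ yields $\int_0^{\infty}\frac{ds}{b(\omega_0,a,s)}=\infty$. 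No SDE comparison, no Feller test, no law of the iterated logarithm.

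By contrast, your direct argument starts from the claim that $\int_{-\infty}^{\infty}\frac{dx}{\tilde b(\omega_0,a,x)}<\infty$ together with monotonicity forces $\tilde b(\omega_0,a,x)\to\infty$ as $x\to-\infty$. That is impossible: hypothesis $(i)$ says $\tilde b(\omega_0,a,\cdot)$ is \emph{non-decreasing}, so $\tilde b(\omega_0,a,x)\le\tilde b(\omega_0,a,0)$ for $x\le 0$ and the tail at $-\infty$ always diverges. You notice this (your parenthetical about the hypothesis being vacuous unless $(i)$ is read as ``$b$ non-decreasing''), but your Feller step via Theorem~\ref{VillaFeller} requires precisely that $x\mapsto b(\omega_0,a,x)/x=\tilde b(\omega_0,a,\log x)$ be non-decreasing, i.e.\ the original $(i)$; you cannot drop that assumption and still invoke Theorem~\ref{VillaFeller}. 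In addition, the pathwise comparison $Z\ge\zeta$ between two processes driven by the same Brownian path lies outside Lemma~\ref{lem:comp} as stated and would need its own justification. Your alternative route (iterated logarithm plus Lemma~\ref{lem:comp}) is workable in spirit but still more elaborate than needed: the paper replaces the search for ``good'' Brownian windows by the crude bound $W_{t+a}-\tfrac{t+a}{2}\ge m_n$ on $[0,n]$ and lets the lower endpoint $e^{m_n}$ slide down to $0$, which is exactly where the full range $\int_0^{\infty}$ of the hypothesis enters.
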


\begin{proof}
Suppose that $T_{\xi }^{Y}(\omega _{0})=\infty $. As in (\ref{auxparan}) we
obtain, for $a>0$, 
\begin{equation*}
Z_{t+a}(\omega _{0})\geq \log \xi (\omega _{0})+W_{t+a}(\omega _{0})-\frac{%
t+a}{2}+\int_{0}^{t}\tilde{b}(\omega _{0},a,Z_{s+a}(\omega _{0}))ds,\ \
t\geq 0.
\end{equation*}%
Renaming $X_{t}=Z_{t+a}$ we obtain%
\begin{equation*}
X_{t}(\omega _{0})\geq m_{n}+\int_{0}^{t}\tilde{b}(\omega
_{0},a,X_{s}(\omega _{0}))ds,\ \ t\in \lbrack 0,n],
\end{equation*}%
where%
\begin{equation*}
m_{n}=\log \xi (\omega _{0})-\sup_{t\in \lbrack 0,n]}|W_{t+a}(\omega _{0})|-%
\frac{n+a}{2}.
\end{equation*}%
In a similar fashion as in previous results we take into account the
equation 
\begin{equation*}
x(t)={m_{n}}+\int_{0}^{t}\tilde{b}(\omega _{0},a,x(s))ds,\ \ t\geq 0.
\end{equation*}%
From the comparison Lemma \ref{lem:comp} $(i)$ and Osgood's criterion (see
Proposition \ref{osgood}) we can establish the inequality 
\begin{equation*}
\int_{0}^{\infty }\frac{ds}{b(\omega _{0},a,s)}\geq \int_{\exp
(m_{n})}^{\infty }\frac{ds}{b(\omega _{0},a,s)}\geq n.
\end{equation*}%
The result is obtained by letting $n\rightarrow \infty $.\hfill 
\end{proof}

\bigskip

\subsection{Main results}

Now we are ready to state the main results of this article.

\begin{theorem}
\label{the:nec} Assume that with probability one\newline
$(i)$ $b$ is non-decreasing by components,\newline
$(ii)$ for each $(t,x)\in \lbrack 0,\infty )\times (0,\infty )$, $b(t,x)>0.$%
\newline
Let $X$ be the solution of equation (\ref{eqpmb}) and 
\begin{eqnarray*}
{\tilde{\Omega}}=\{\omega \in \Omega  &:&W_{\cdot }(\omega )\ \text{is
continuous and }b(\omega ,\cdot )\text{ satisfy} \\
&&\text{the above hypotheses, }\xi (\omega )>0\}.
\end{eqnarray*}%
For almost all $\omega _{0}$ in $\tilde{\Omega}$, if $X_{\cdot }(\omega _{0})$ explodes in finite time then%
\begin{equation*}
\int_{\theta }^{\infty }\frac{ds}{b(\omega _{0},a,s)}<\infty ,\ \ \forall
\theta >0,
\end{equation*}%
for some $a>0.$
\end{theorem}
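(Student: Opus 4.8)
The strategy is to reduce equation (\ref{eqpmb}), which has the multiplicative noise $\sigma(s)X_s\,dW_s$, to the already-treated case $\sigma\equiv 1$ by means of a pathwise exponential (Doléans-Dade) transformation, and then invoke Proposition \ref{Propidea}. Specifically, set
\begin{equation*}
E_t=\exp\left(-\int_0^t\sigma(s)\,dW_s+\frac12\int_0^t\sigma^2(s)\,ds\right),\ \ t\ge 0,
\end{equation*}
and consider $R_t=X_tE_t$ for $t<T_\xi^X(\omega_0)$. Itô's formula kills the stochastic integral exactly as in (\ref{auxito}) and (\ref{auxva}), yielding
\begin{equation*}
R_t=\xi+\int_0^tE_s\,b\bigl(s,E_s^{-1}R_s\bigr)\,ds,\ \ t<T_\xi^X(\omega_0),
\end{equation*}
so that $R$ is non-decreasing (since $b\ge0$) and $X_t=E_t^{-1}R_t>0$; in particular $X$ can only explode to $+\infty$.

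\textbf{Main argument.} Fix $\omega_0\in\tilde\Omega$ with $T_\xi^X(\omega_0)<\infty$. By continuity of $W_\cdot(\omega_0)$ and $\sigma(\omega_0,\cdot)$ on the compact interval $[0,T_\xi^X(\omega_0)]$, the quantities
\begin{equation*}
m=\inf\{E_s^{-1}(\omega_0):s\in[0,T_\xi^X(\omega_0)]\}>0,\qquad M^{\ast}=\sup\{E_s^{-1}(\omega_0):s\in[0,T_\xi^X(\omega_0)]\}<\infty
\end{equation*}
are finite and strictly positive. Since $R(\omega_0)\uparrow\infty$, pick $0<T<T_\xi^X(\omega_0)$ with $R_s(\omega_0)>1/m$ on $[T,T_\xi^X(\omega_0))$, so that $E_s^{-1}R_s>1$ (hence $>0$) there; then, with $y(t)=M^{\ast}R_{T+t}(\omega_0)$ and a constant $M$ chosen large enough (e.g. $M=M^{\ast}R_T+M^{\ast}+1$, absorbing $M^{\ast}$ so that the shifted bounds work), the monotonicity hypothesis $(i)$ on $b$ gives
\begin{equation*}
y(t)\le M^2+\int_0^t M^2\, b\bigl(\omega_0,T_\xi^X(\omega_0),y(s)\bigr)\,ds,\ \ 0\le t<T_\xi^X(\omega_0)-T,
\end{equation*}
with $y>0$ on that interval. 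Comparing with the solution $x$ of the associated autonomous ODE $x(t)=M^2+\int_0^tM^2b(\omega_0,T_\xi^X(\omega_0),x(s))\,ds$ via Lemma \ref{lem:comp}$(ii)$ forces $x$ to explode by time $T_\xi^X(\omega_0)-T<\infty$, whence Osgood's criterion (Proposition \ref{osgood}) yields $\int_{M^2}^\infty (M^2b(\omega_0,T_\xi^X(\omega_0),s))^{-1}\,ds<\infty$. Finally, the continuity of $b$ together with hypothesis $(ii)$ (which makes $b(\omega_0,a,\cdot)$ positive on $(0,\infty)$ for every $a>0$, so the integrand is finite on any $[\theta,\infty)$ with $\theta>0$) upgrades this to
\begin{equation*}
\int_\theta^\infty\frac{ds}{b(\omega_0,a,s)}<\infty,\ \ \forall\theta>0,
\end{equation*}
with $a=T_\xi^X(\omega_0)$.

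\textbf{Remark on the obstacle.} The one genuinely new feature compared with Proposition \ref{Propidea} is that the Doléans exponential $E_t$ replaces $\exp(W_t-t/2)$; the only thing to check is that $\int_0^t\sigma(s)\,dW_s$ is a well-defined continuous local martingale and that $E$ is strictly positive and path-continuous up to the explosion time, which follows at once from assumption \textbf{H2} (predictability and continuity of $\sigma$, hence local boundedness on compacts). After that, the chain ``Itô $\Rightarrow$ positivity $\Rightarrow$ comparison with a frozen-time autonomous ODE $\Rightarrow$ Osgood'' is essentially verbatim the proof of Proposition \ref{Propidea}, so the main work is just bookkeeping with the constants $m$, $M^{\ast}$ and the localizing time $T$; I expect no conceptual difficulty. (If one prefers to avoid re-proving anything, an alternative is to note that $Z_t=\log X_t$ satisfies an equation of the form (\ref{eq:sigma1}) only when $\sigma\equiv1$, so the exponential change of variables above — rather than a direct appeal to Proposition \ref{Propidea} — is the cleanest route.)
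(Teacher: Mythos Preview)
Your argument is correct and shares its first step with the paper: both introduce the Dol\'eans--Dade factor
\[
g(t)=E_t=\exp\!\left(-\int_0^t\sigma(s)\,dW_s+\tfrac12\int_0^t\sigma^2(s)\,ds\right)
\]
and pass to the pathwise ODE $R_t=\xi+\int_0^tE_s\,b(s,E_s^{-1}R_s)\,ds$ (the paper writes $Y$, $g$, $f$ for your $R$, $E$, $E^{-1}$). From this point the two proofs diverge. You replay the mechanism of Proposition~\ref{Propidea}: localize, use monotonicity of $b$ to trap $y(t)=M^\ast R_{T+t}$ under an autonomous integral inequality, invoke Lemma~\ref{lem:comp}$(ii)$, and finish with Osgood. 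The paper instead exploits that the transformed equation is a genuine first-order ODE with strictly positive right-hand side, separates variables to obtain the explicit identity
\[
\int_{\xi(\omega_0)}^{Y_t(\omega_0)}\frac{ds}{b\bigl(\omega_0,y^{-1}(s),\,s\,f(\omega_0,y^{-1}(s))\bigr)}=\int_0^t g(\omega_0,s)\,ds,
\]
and then bounds the left side using monotonicity in both variables; no comparison lemma is needed. Your route is the more modular one (it literally is Proposition~\ref{Propidea} with $e^{W_t-t/2}$ replaced by $E_t^{-1}$), while the paper's route is a bit shorter and sidesteps Lemma~\ref{lem:comp} entirely. One cosmetic point: your line ``$M=M^\ast R_T+M^\ast+1$, absorbing $M^\ast$'' is imprecise; the honest inequality after multiplying by $M^\ast$ is
\[
y(t)\le M^\ast R_T+\int_0^t\frac{M^\ast}{m}\,b\bigl(\omega_0,T_\xi^X(\omega_0),y(s)\bigr)\,ds,
\]
so you should simply take any $M$ with $M^2\ge\max\{M^\ast R_T,\,M^\ast/m\}$. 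Also note that with hypothesis~$(ii)$ the effective threshold is $c=0$, so $y(t)=M^\ast R_{T+t}>0$ automatically and the localization step choosing $T$ is harmless but not strictly necessary.
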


\begin{proof}
Set 
\begin{equation}
g(t)=\exp \left( -\int_{0}^{t}\sigma (s)dW_{s}+\frac{1}{2}\int_{0}^{t}\sigma
^{2}(s)ds\right) ,\ \ t\geq 0.  \label{fung}
\end{equation}%
So, using It\^{o}'s formula, we have%
\begin{equation}
Y_{t}=\xi +\int_{0}^{t}g(s)b(s,f(s)Y_{s})ds,\ \ t\geq 0,  \label{edem}
\end{equation}%
where 
\begin{equation*}
Y_{t}=g(t)X_{t}\text{ \ and \ }f(t)=\frac{1}{g(t)},\ \ t\geq 0.
\end{equation*}%
Consequently, for $\omega _{0}\in {\tilde{\Omega}}$ such that satisfies (\ref{edem}), the continuity of $g$
and $b(\omega _{0},\cdot )$ imply that (\ref{edem}) can be written as%
\begin{eqnarray}
y^{\prime }(t) &=&g(\omega _{0},t)b(\omega _{0},t,f(\omega _{0},t)y(t)),\ \
t>0,  \label{edo} \\
y(0) &=&\xi (\omega _{0}),  \notag
\end{eqnarray}%
where $y(t)=Y_{t}(\omega _{0})$, $t\geq 0$. Since $b\geq 0$ then $f(\omega
_{0},t)y(t)\geq f(\omega _{0},t)\xi >0$. Therefore (\ref{edo}) and
hypothesis (ii) turns out%
\begin{eqnarray}
\int_{\xi (\omega _{0})}^{Y_{t}(\omega _{0})}\frac{ds}{b(\omega
_{0},y^{-1}(s),sf(\omega _{0},y^{-1}(s)))} &=&\int_{0}^{t}g(\omega _{0},s)ds
\notag \\
 &:=&G(\omega _{0},t).  \label{expliy}
\end{eqnarray}%
Suppose now that $T_{\xi (\omega _{0})}^{Y(\omega _{0})}<\infty $, where $
T_{\xi (\omega _{0})}^{Y(\omega _{0})}$ is defined as in the Proposition \ref%
{osgood}. Hence, $y^{-1}(t)<T_{\xi (\omega _{0})}^{Y(\omega _{0})}$, $t\geq
\xi (\omega _{0})$, and therefore hypothesis\textbf{\ }$(i)$ implies%
\begin{eqnarray*}
\int_{\xi (\omega _{0})}^{\infty }\frac{ds}{b(\omega _{0},T_{\xi (\omega
_{0})}^{Y(\omega _{0})},sM)} &\leq &\int_{\xi (\omega _{0})}^{Y_{T_{\xi
(\omega _{0})}^{Y(\omega _{0})}}}\frac{ds}{b(\omega _{0},y^{-1}(s),sf(\omega
_{0},y^{-1}(s)))}, \\
&=&G(\omega _{0},T_{\xi (\omega _{0})}^{Y(\omega _{0})})<\infty ,
\end{eqnarray*}%
with 
\begin{equation*}
M=\sup \{f(\omega _{0},y^{-1}(s)):s\geq \xi (\omega _{0})\}\leq \sup
\{f(\omega _{0},r):r\in \lbrack 0,T_{\xi (\omega _{0})}^{Y(\omega _{0})}]\}.
\end{equation*}%
Thus, the proof is complete \hfill
\end{proof}

\bigskip

In the remainder of this paper we will need the following notation 
\begin{equation*}
\Lambda (t)=\int_{0}^{t}\sigma ^{2}(s)ds,\ \ t\geq 0.
\end{equation*}

The following two results are in certain sense the converse of Theorem \ref%
{the:nec}.

\begin{theorem}
Let $X$ be the solution of (\ref{eqpmb}). Assume that hypotheses of Theorem \ref{the:nec} are true.  Let $\omega_{0}\in {\tilde{\Omega}}$ be such that $\Lambda (\omega _{0},\infty )<\infty$ and $X_{t}(\omega _{0})$ is finite for all $t\geq 0$. Then, 
\begin{equation*}
\int_{\theta }^{\infty }\frac{ds}{b(\omega _{0},a,s)}=\infty ,\ \ \forall
\theta >0,
\end{equation*}%
for all $a>0$.
\end{theorem}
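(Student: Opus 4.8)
The plan is to reuse, for the fixed $\omega_0$, the pathwise reduction from the proof of Theorem~\ref{the:nec}. With $g$ as in (\ref{fung}), $f=1/g$ and $Y_t=g(t)X_t$, the identity (\ref{edem}) becomes the deterministic integral equation
\[
y(t)=\xi(\omega_0)+\int_0^t g(\omega_0,s)\,b(\omega_0,s,f(\omega_0,s)y(s))\,ds,\qquad t\ge0,
\]
where $y(t):=Y_t(\omega_0)$; this holds on all of $[0,\infty)$ because $X(\omega_0)$ does not explode, and for the same reason $y$ is finite and continuous on $[0,\infty)$, with $y(s)\ge\xi(\omega_0)>0$. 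The goal is to deduce the divergence of $\int_\theta^\infty ds/b(\omega_0,a,s)$ from this non-explosion, for every $a>0$ and $\theta>0$, by an argument close to the sufficiency parts of Theorem~\ref{trespuntodos} and Proposition~\ref{noconver}.

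The ingredient that replaces the law of the iterated logarithm used there is the hypothesis $\Lambda(\omega_0,\infty)<\infty$. Since the quadratic variation of the continuous local martingale $t\mapsto\int_0^t\sigma(s)\,dW_s$ is $\Lambda(\cdot)$, this martingale converges as $t\to\infty$ on the event $\{\Lambda(\infty)<\infty\}$, up to a $\mathbb{P}$-null set that I would adjoin to the exceptional set already implicit in $\tilde\Omega$. Hence $g(\omega_0,t)$ tends to a strictly positive finite limit as $t\to\infty$, so, being continuous and positive on $[0,\infty)$, it satisfies $0<\kappa\le g(\omega_0,t)$ and $0<\kappa\le f(\omega_0,t)$ for all $t\ge0$ and some constant $\kappa$. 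Then for a fixed $a>0$, using $b\ge0$, the monotonicity of $b$ in both arguments, and $y(s)\ge\xi(\omega_0)>0$, one obtains $g(\omega_0,s)\,b(\omega_0,s,f(\omega_0,s)y(s))\ge\kappa\,b(\omega_0,a,\kappa y(s))$ for all $s\ge a$, and therefore
\[
y(t)\ \ge\ y(a)+\int_a^t\kappa\,b(\omega_0,a,\kappa y(s))\,ds,\qquad t\ge a.
\]

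To finish, I would set $\tilde w(u)=\kappa\,y(a+u)$ and $\beta(x)=\kappa^2 b(\omega_0,a,x)$, so that $\tilde w(u)\ge\kappa y(a)+\int_0^u\beta(\tilde w(s))\,ds$ on $[0,\infty)$, with $\beta$ continuous, non-negative, non-decreasing and positive on $(0,\infty)$ and $\kappa y(a)>0$. Applying Lemma~\ref{lem:comp}$(i)$ with $c=0$ on $[0,T]$ for every $T$ below the explosion time of the solution $x$ of $x(t)=\kappa y(a)+\int_0^t\beta(x(s))\,ds$ gives $x\le\tilde w$ for as long as $x$ is defined; since $\tilde w$ is finite and continuous on $[0,\infty)$, $x$ cannot explode in finite time, and Osgood's criterion (Proposition~\ref{osgood}) then yields $\int_{\kappa y(a)}^\infty ds/\beta(s)=\infty$, i.e.\ $\int_{\kappa y(a)}^\infty ds/b(\omega_0,a,s)=\infty$. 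Since $b(\omega_0,a,\cdot)$ is continuous and strictly positive on each compact subset of $(0,\infty)$, the convergence of this tail integral does not depend on its lower limit, so $\int_\theta^\infty ds/b(\omega_0,a,s)=\infty$ for every $\theta>0$; and $a>0$ was arbitrary. I expect the only genuine obstacle to be the careful justification of the uniform bounds on $g(\omega_0,\cdot)$ and $f(\omega_0,\cdot)$ — equivalently, the pathwise convergence of the stochastic integral on $\{\Lambda(\infty)<\infty\}$ together with the attendant null-set bookkeeping — after which the comparison-and-Osgood steps are entirely routine.
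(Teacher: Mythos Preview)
Your argument is correct. Both your proof and the paper's rest on the same pathwise reduction $Y_t=g(t)X_t$ together with the key observation that $\Lambda(\omega_0,\infty)<\infty$ forces $\int_0^{\cdot}\sigma\,dW$ to be bounded (the paper isolates this as Remark~\ref{recvcfint}), so that $g(\omega_0,\cdot)$ and $f(\omega_0,\cdot)$ are bounded away from $0$ and $\infty$. The difference is in the last step: the paper uses the change-of-variables identity (\ref{expliy}), first showing $G(\omega_0,\infty)=\infty$, then deducing $\lim_{t\to\infty}Y_t(\omega_0)=\infty$, and finally bounding the integrand in (\ref{expliy}) via the monotonicity of $b$ and $\inf f>0$ to obtain $\int_{Y_a}^\infty ds/b(\omega_0,a,sm)=\infty$. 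You instead feed the lower bound on the integrand directly into the integral inequality $y(t)\ge y(a)+\int_a^t\kappa\,b(\omega_0,a,\kappa y(s))\,ds$ and invoke Lemma~\ref{lem:comp}(i) plus Proposition~\ref{osgood}, exactly in the style of Proposition~\ref{noconver}. Your route avoids the inverse function $y^{-1}$ and the separate argument that $Y_t\to\infty$, at the price of one more appeal to the comparison lemma; the paper's route is shorter once (\ref{expliy}) is already available from the proof of Theorem~\ref{the:nec}. Either way the substance is the same.
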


\begin{remark}
\label{recvcfint}Theorem 3.4.9 in \cite{Du}, implies that 
\begin{equation*}
\left\{ \omega \in \Omega :\int_{0}^{\cdot }\sigma (s)dW_{s}\ \text{is
bounded on }\mathbb{R}_{+}\right\}
\end{equation*}%
coincides with the set $\{\omega \in \Omega :\Lambda (\omega ,\infty
)<\infty \}$ by redefining $\sigma $ on a set of probability zero.
\end{remark}

\begin{proof}
Let $\omega _{0}$ be as in the statement of the theorem. Then (\ref{fung})
and (\ref{expliy}) lead us to%
\begin{eqnarray}
G(\omega _{0},\infty ) &\geq &\int_{0}^{\infty }\exp \left( -\left(
\int_{0}^{s}\sigma (r)dW_{r}\right) (\omega _{0})\right) ds  \notag \\
&\geq &\int_{0}^{\infty }\exp \left( -\sup_{t\geq 0}\left\vert \left(
\int_{0}^{t}\sigma (r)dW_{r}\right) (\omega _{0})\right\vert \right)
ds=\infty ,  \label{ginfty}
\end{eqnarray}%
where we have used the Remark \ref{recvcfint} in the last equality. Inasmuch
as $b\geq 0$ we see that $Y(\omega _{0})$ is increasing, then $%
\lim_{t\rightarrow \infty }Y_{t}(\omega _{0})$ exists. On the other hand, (%
\ref{expliy}) and (\ref{ginfty}) implies%
\begin{equation}
\int_{\xi (\omega _{0})}^{\underset{t\rightarrow \infty }{\lim }Y_{t}(\omega
_{0})}\frac{ds}{b(\omega _{0},y^{-1}(s),sf(\omega _{0},y^{-1}(s)))}=\infty .
\label{inqinlif}
\end{equation}%
Since $b(\omega _{0},\cdot )$ is continuous and $b(\omega _{0},\cdot )>0$ on 
$[0,\infty )\times (0,\infty )$ we can deduce that $\lim_{t\rightarrow
\infty }Y_{t}(\omega _{0})=\infty $. Let $a>0$, by (\ref{inqinlif}) one
obtains%
\begin{equation*}
\infty =\int_{Y_{a}(\omega _{0})}^{\infty }\frac{ds}{b(\omega
_{0},y^{-1}(s),sf(\omega _{0},y^{-1}(s)))}\leq \int_{Y_{a}(\omega
_{0})}^{\infty }\frac{ds}{b(\omega _{0},a,sm))},
\end{equation*}%
where 
\begin{eqnarray*}
m=\inf \{f(\omega _{0},r):r\geq 0\} &\geq &\exp \left( -\frac{1}{2}\Lambda
(\omega _{0},\infty )\right) \\
&&\times \exp \left( -\sup_{t\geq 0}\left\vert \left( \int_{0}^{t}\sigma
(r)dW_{r}\right) (\omega _{0})\right\vert \right) .
\end{eqnarray*}%
Using again the Remark \ref{recvcfint} we deduce that $m>0$, from which
allows us to conclude the result.\hfill
\end{proof}

\begin{theorem}
Assume that with probability one \newline
$(i)$ $\sigma ^{2}>0$ in $(0,\infty )$ and $\Lambda (\infty )=\infty $, 
\newline
$(ii)$ the function $\breve{b}:{\Omega \times \lbrack }0,\infty )\times {%
\mathbb{R}}\rightarrow {\mathbb{R}}$, defined as%
\begin{equation}
\breve{b}(\omega ,t,x)=\frac{b(\omega ,\Lambda ^{-1}(t),e^{x})}{\sigma
^{2}(\Lambda ^{-1}(t))e^{x}},  \label{nbtilde}
\end{equation}%
is non-decreasing by components, \newline
$(iii)$ for each $(t,x)\in \lbrack 0,\infty )\times (0,\infty )$, $b(t,x)>0$%
. \newline
For almost all $\omega _{0}$ in 
\begin{eqnarray*}
{\tilde{\Omega}}=\{\omega \in \Omega  &:&W_{\cdot }(\omega )\ \text{is
continuous and }\breve{b}(\omega ,\cdot ),\ b(\omega ,\cdot )\text{ satisfy%
} \\
&&\text{the above hypotheses, }\xi (\omega )>0\}
\end{eqnarray*}%
the solution $X_{\cdot }(\omega _{0})$ of (\ref{eqpmb}) explodes in finite
time if 
\begin{equation*}
\int_{0}^{\infty }\frac{ds}{b(\omega _{0},a,s)}<\infty 
\end{equation*}%
for some $a>0$.
\end{theorem}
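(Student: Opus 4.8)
The plan is to reduce the SDE (\ref{eqpmb}) to an equation to which Proposition \ref{noconver} applies, by a deterministic time change that absorbs the variance. Recall from the proof of Theorem \ref{the:nec} that, setting $g$ as in (\ref{fung}) and $Y_{t}=g(t)X_{t}$, the process $Y$ solves (\ref{edem}), i.e. $Y_{t}=\xi+\int_{0}^{t}g(s)b(s,f(s)Y_{s})\,ds$ with $f=1/g$, and $T_{\xi}^{Y}=T_{\xi}^{X}$. First I would perform the time change $t\mapsto \Lambda^{-1}(t)$ (well defined on $[0,\infty)$ by hypothesis $(i)$, since $\sigma^{2}>0$ and $\Lambda(\infty)=\infty$) and write $U_{t}=Y_{\Lambda^{-1}(t)}$, so that $U$ is still finite exactly until the (time-changed) explosion and satisfies
\begin{equation*}
U_{t}=\xi+\int_{0}^{t}\frac{g(\Lambda^{-1}(s))}{\sigma^{2}(\Lambda^{-1}(s))}\,b\!\left(\Lambda^{-1}(s),f(\Lambda^{-1}(s))U_{s}\right)ds,\ \ t\geq 0.
\end{equation*}

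Next I would recognize this as an equation of the form treated in Proposition \ref{noconver} once we pass to the logarithm. The point of the definition (\ref{nbtilde}) of $\breve b$ is precisely that, after the substitution $s=\Lambda^{-1}(s)$ and $e^{x}$ in place of the spatial variable, the integrand becomes governed by $\breve b$ together with the multiplicative Girsanov factor coming from $g\circ\Lambda^{-1}$. Concretely, I expect $\int_{0}^{t}\sigma(s)\,dW_{s}$ under the time change to become a Brownian motion $\widetilde W$, so that $g(\Lambda^{-1}(s))=\exp(-\widetilde W_{s}+s/2)$ up to the usual computation, and then the equation for $\log U$ (or a suitably rescaled version of $U$) takes exactly the shape $\log \xi+\widetilde W_{t}-\tfrac{t}{2}+\int_{0}^{t}\breve b(\ldots)\,ds$ that appears in Proposition \ref{noconver} via (\ref{auxparan}). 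The hypotheses $(ii)$ (monotonicity of $\breve b$ by components) and $(iii)$ ($b>0$ on $[0,\infty)\times(0,\infty)$) are the translations of the hypotheses of Proposition \ref{noconver} through this change of variables.

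Then the conclusion follows by invoking Proposition \ref{noconver}: its hypothesis is that $\int_{0}^{\infty}ds/b(\omega_{0},a,s)<\infty$ for some $a>0$, which is exactly the hypothesis we are given here, and its conclusion is that the corresponding solution explodes in finite time. Since the deterministic, finite-valued time change $\Lambda^{-1}$ maps finite explosion times to finite explosion times (and non-explosion to non-explosion), finite-time blow-up of the time-changed equation is equivalent to finite-time blow-up of $X_{\cdot}(\omega_{0})$, and we are done. One should also note that the $\omega_{0}$-sets match up: the set $\tilde\Omega$ here already requires $W_{\cdot}(\omega)$ continuous and $\breve b(\omega,\cdot),\,b(\omega,\cdot)$ to satisfy the stated monotonicity and positivity, which are precisely what is needed to apply Proposition \ref{noconver} to the transformed pathwise equation.

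The main obstacle I anticipate is bookkeeping in the time change: one must check carefully that $\int_{0}^{\Lambda^{-1}(t)}\sigma(s)\,dW_{s}$ really is (a time change of) Brownian motion and that the deterministic factor $g(\Lambda^{-1}(s))/\sigma^{2}(\Lambda^{-1}(s))$ combines with $\breve b$ exactly as in the hypotheses, so that Proposition \ref{noconver} applies verbatim rather than requiring a fresh repetition of its comparison-and-Osgood argument. A secondary technical point is to confirm that $\Lambda^{-1}$ is continuous and strictly increasing on $[0,\infty)$ on $\tilde\Omega$ (which follows from $\sigma^{2}>0$ on $(0,\infty)$ and $\Lambda(\infty)=\infty$), so that it genuinely transports the explosion time; everything else is the routine change-of-variables in the pathwise ODE already used in the proofs of Theorem \ref{the:nec} and Proposition \ref{noconver}.
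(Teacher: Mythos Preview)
Your plan is correct and is essentially the paper's own argument: use the Dambis--Dubins--Schwarz representation $\int_{0}^{t}\sigma(s)\,dW_{s}=\tilde B_{\Lambda(t)}$, time-change $Y$ by $\Lambda^{-1}$ to obtain $Z_{t}=Y_{\Lambda^{-1}(t)}$, and then apply Proposition~\ref{noconver} to the resulting equation. One small sharpening: the ``suitably rescaled version of $U$'' you allude to is not $\log U$ but rather $\tilde Z_{t}=Z_{t}\,e^{\tilde B_{t}-t/2}$ (i.e.\ $X_{\Lambda^{-1}(t)}$), which by It\^o's formula solves the SDE $\tilde Z_{t}=\xi+\int_{0}^{t}\dfrac{b(\Lambda^{-1}(s),\tilde Z_{s})}{\sigma^{2}(\Lambda^{-1}(s))}\,ds+\int_{0}^{t}\tilde Z_{s}\,d\tilde B_{s}$; this is exactly equation~(\ref{eq:sigma1}) with drift $\hat b(s,x)=b(\Lambda^{-1}(s),x)/\sigma^{2}(\Lambda^{-1}(s))$, whose associated $\tilde{\hat b}$ is precisely $\breve b$, so Proposition~\ref{noconver} applies verbatim and the integral condition for $\hat b$ at some $a'>0$ is equivalent to the stated condition for $b$ at $a=\Lambda^{-1}(a')$.
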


\begin{proof}
From Theorem 3.4.4 in \cite{Du} we know that there exists a Brownian motion $%
{\tilde{B}}=\{{\tilde{B}}_{t}:t\geq 0\}$ such that 
\begin{equation*}
\int_{0}^{t}\sigma (s)dW_{s}={\tilde{B}}_{\Lambda (t)},\ \ t\geq 0.
\end{equation*}%
This leads us to write (\ref{edem}) as%
\begin{equation*}
Y_{t}=\xi +\int_{0}^{t}e^{-{\tilde{B}}_{\Lambda (s)}+\frac{1}{2}\Lambda
(s)}b(s,e^{{\tilde{B}}_{\Lambda (s)}-\frac{1}{2}\Lambda (s)}Y_{s})ds,\ \
t\geq 0.
\end{equation*}%
Moreover, making the change of variable $u=\Lambda (s)$ and setting $%
Z_{t}=Y_{\Lambda ^{-1}(t)}$ we consider the equation%
\begin{equation*}
Z_{t}=\xi +\int_{0}^{t}\frac{e^{-{\tilde{B}}_{s}+\frac{1}{2}s}}{\sigma
^{2}(\Lambda ^{-1}(s))}\ b(\Lambda ^{-1}(s),e^{{\tilde{B}}_{s}-\frac{1}{2}%
s}Z_{s})ds,\ \ t\geq 0.
\end{equation*}%
Finally, if ${\tilde{Z}}_{t}=Z_{t}e^{{\tilde{B}}_{t}-t/2}$, by It\^{o}'s
formula we have%
\begin{equation*}
{\tilde{Z}}_{t}=\xi +\int_{0}^{t}\frac{b(\Lambda ^{-1}(s),{\tilde{Z}}_{s})}{%
\sigma ^{2}(\Lambda ^{-1}(s))}ds+\int_{0}^{t}{\tilde{Z}}_{s}d{\tilde{B}}%
_{s},\ \ t\geq 0,
\end{equation*}%
and the result follows from Proposition \ref{noconver}, because $b$ and $\breve{b}$ meet the respective assumptions of such proposition.\hfill 
\end{proof}

\begin{remark}
Let $c\geq 0$ and suppose that with probability one the function $\breve{b}$, defined in (\ref{nbtilde}), satisfies hypothesis $(i)-(iii)$ in Theorem %
\ref{trespuntodos}. Then, for all $\omega _{0}\in {\tilde{\Omega}}$ the
solution $X_{\cdot }(\omega _{0})$ of equation (\ref{eqpmb}) explodes in
finite time if and only if 
\begin{equation*}
\int_{\theta }^{\infty }\frac{ds}{2b(\omega _{0},a,s)-\sigma^{2}(a)s}<\infty ,\
\ \forall \theta >e^{c},
\end{equation*}%
for some constant $a>0$.
\end{remark}

\bigskip

\noindent \textit{Acknowledgment:} The authors thanks Universidad Aut\'{o}%
noma de Aguascalientes and CINVESTAV-IPN for their hospitality and
economical support.

\bigskip


\begin{thebibliography}{99}
\bibitem{davila} {J. D\'{a}vila, J.F. Bonder, J.D. Rossi, P. Groisman, M.
Sued (2005). \textit{Numerical Analysis of Stochastic Differential Equations
with Explosions}, Stoch. Anal. and Appl. 23, no. 4, 809-825.}

\bibitem{Du} {R. Durrett (1996). }Stochastic Calculus: A Practical
Introduction{, \textit{CRC Press}.}

\bibitem{Feller} W. Feller (1952). \textit{The parabolic differential
equations and the associated semi-groups of transformations}, Ann. Math. 55,
468-519.

\bibitem{fujita} {H. Fujita (1966). \textit{On the blowing up of solutions
of the Cauchy problem for }$u_{t}=\Delta u+u^{1+\alpha }$, J. Fac. Sci.
Univ. Tokyo Sec. A. Math. 16, 105-113.}

\bibitem{hu} {B. Hu (2011). Blow-up Theories for Semilinear Parabolic
Equations, }\textit{Springer, New York.}

\bibitem{kaplan} {S. Kaplan (1963). \textit{On the growth of solutions of
quasi-linear parabolic equations}, Comm. on Pure and App. Math. 16, no. 3,
305-330.}

\bibitem{karatzas} {I. Karatzas, S.E. Shreve (1991). Brownian Motion and
Stochastic Calculus, \textit{Springer, New York}.}

\bibitem{lionjose} {J.A. Le\'{o}n, J. Villa (2011). \textit{An Osgood
criterion for integral equations with applications to stochastic
differential equations with an additive noise}, Statistics \& Probability
Letters 81, no. 4, 470-477.}

\bibitem{os} {W.F. Osgood (1898). \textit{Beweis der Existenz einer L\"{o}%
sung der Differentialgleichung }$dy/dx=f(x,y)$\textit{\ ohne Hinzunahme der
Cauchy-Lipschitz'schen Bedigung}, Monat. Math. Phys. 9, no. 1, 331-345.}

\bibitem{pachpa} {B.G. Pachpatte (1998). Inequalities for Differential and
Integral Equations, \textit{Academic Press}.}

\bibitem{pevilla} {A. Perez, J. Villa (2010).\textit{\ Blow-up for a system
with time-dependent generators}, ALEA 7, 207-215.}

\bibitem{sobczyk} {K. Sobczyk, B.F. Spencer, Jr. (1992). Random Fatigue:
From Data to Theory, \textit{Academic Press}.}
\end{thebibliography}
\end{document}